\DeclareSymbolFont{cyrletters}{OT2}{wncyr}{m}{n}
\DeclareMathSymbol{\Sha}{\mathalpha}{cyrletters}{"58}
\newcommand{\Z}{\mathbb{Z}}
\newcommand{\Gm}{\mathbb{G}_m}
\newcommand{\Gmks}{\mathbb{G}_{m,k_s}}
\newcommand{\GmA}{\mathbb{G}_{m,A}}
\newcommand{\R}{\mathcal{R}}
\newcommand{\sgr}{\mathfrak{S}}
\newcommand{\Id}{Id}
\newcommand{\qf}[1]{{\langle{#1}\rangle}}
\def\wed{\operatorname{%
  \mathchoice{\extpow}{\extpow}{\wedge}{\wedge}}}
\newcommand{\extpow}{\mbox{\large$\wedge$}}
\DeclareMathOperator{\coker}{coker}
\DeclareMathOperator{\Gal}{Gal}
\DeclareMathOperator{\Hom}{Hom}
\DeclareMathOperator{\Br}{Br}
\DeclareMathOperator{\cor}{cor}
\DeclareMathOperator{\Orth}{\mathbf{O}}
\DeclareMathOperator{\disc}{disc}
\DeclareMathOperator{\Tr}{Tr}
\DeclareMathOperator{\Cl}{Cl}
\newtheorem{lemme}{Lemme}[section]
\newtheorem{prop}[lemme]{Proposition}
\newtheorem{corol}[lemme]{Corollaire}
\newtheorem{thm}[lemme]{Th\'eor\`eme}
\theoremstyle{remark}
\newtheorem{rem}[lemme]{Remarque}
\newtheorem*{remm}{Remarque}
\theoremstyle{definition}
\newtheorem{example}[lemme]{Exemple}
\title{Tores associ\'es \`a une alg\`ebre \'etale quartique}
\author{Jean-Pierre Tignol\thanks{L'auteur remercie Jean-Louis
Colliot-Th\'el\`ene d'avoir attir\'e son attention sur l'article de
Sivatski~\cite{S}, et d'avoir simplifi\'e sa preuve du
th\'eor\`eme~\ref{thm:Sstabrat}. Il b\'en\'eficie de subventions du Fonds de
la Recherche Scientifique--FNRS portant les n$^\circ$~J.0014.15 et J.0149.17.}}
\date{14 avril 2017}
\begin{document}

\maketitle

\begin{abstract}
  Cet article met en \'evidence des homomorphismes entre le groupe
  multiplicatif d'une alg\`ebre \'etale de dimension~$4$ et celui d'une
  alg\`ebre \'etale de dimension~$6$ qui lui est canoniquement associ\'ee
  sur un corps arbitraire. Ces homomorphismes sont utilis\'es pour
  mettre en relation diff\'erents groupes de normes et pour donner une
  description de la $2$-torsion dans le groupe de Brauer relatif d'une
  extension de degr\'e~$4$. Les r\'esultats obtenus \'etendent aux alg\`ebres
  \'etales arbitraires de dimension~$4$ plusieurs propri\'et\'es
  remarquables des extensions biquadratiques.
\end{abstract}

\`A toute alg\`ebre \'etale $L$ de dimension~$4$ sur un corps $k$
arbitraire est associ\'ee canoniquement (\`a isomorphisme pr\`es) une
extension quadratique \'etale $S$ d'une $k$-alg\`ebre \'etale cubique
$C$: dans~\cite{KT} (o\`u l'on utilise la notation $\wed_2(L)$ pour
$S$ 
et $\R(L)$ pour $C$), les alg\`ebres $S$ et $C$ sont d\'efinies
comme 
dans~\eqref{eq:defSC} ci-dessous \`a l'aide de la correspondance entre
$k$-alg\`ebres 
\'etales et ensembles finis munis d'une action du groupe de Galois
absolu $\Gamma$ de $k$. En termes de polyn\^omes, cette construction est
tr\`es classique: si $P$ est le polyn\^ome minimal (de degr\'e~$4$) d'un
g\'en\'erateur de $L$, alors $C$ est engendr\'ee par une racine de la
cubique r\'esolvante de $P$, et $S$ est caract\'eris\'ee par la
condition que $P$ se factorise en produit de deux polyn\^omes de
degr\'e~$2$ dans $S[X]$. On peut aussi formuler la d\'efinition de
$S$ et $C$ en termes de cohomologie galoisienne des groupes
sym\'etriques $\mathfrak{S}_n$, comme dans \cite[\S5.2]{KT}. En effet,
les $k$-alg\`ebres \'etales 
quartiques sont classifi\'ees par l'ensemble de cohomologie
$H^1(\Gamma,\mathfrak{S}_4)$, tandis que les extensions quadratiques
\'etales de $k$-alg\`ebres \'etales cubiques sont classifi\'ees par
$H^1(\Gamma,\mathfrak{S}_2\wr\mathfrak{S}_3)$, o\`u
$\mathfrak{S}_2\wr\mathfrak{S}_3$ est le produit couronne de
$\mathfrak{S}_2$ et $\mathfrak{S}_3$. La correspondance entre alg\`ebres
quartiques et extensions quadratiques d'alg\`ebres cubiques apparait
alors comme un avatar de la co\"\i ncidence de diagrammes de Dynkin
$\mathsf{A}_3\equiv\mathsf{D}_3$, qui a pour cons\'equence l'identit\'e
des groupes de Weyl associ\'es
\[
\mathfrak{S}_4=W(\mathsf{A}_3)=W(\mathsf{D}_3) \subset
\mathfrak{S}_2\wr\mathfrak{S}_3.
\]
Nous appellerons $S/C$
\emph{l'extension quadratique r\'esolvante} de $L$. 
\medbreak

Le but de ce travail
est de d\'ecrire les relations entre les 
$k$-tores des groupes multiplicatifs de $L$, $S$ et $C$. Si l'on
note ces tores $T_L$, $T_{S}$ et $T_{C}$,
que l'on d\'esigne par $T^1_{L}$ le noyau de la norme
$T_{L}\to\Gm$ et que l'on d\'efinit semblablement
$T^1_{S}$ et $T^1_{C}$, notre outil principal
consiste en deux morphismes 
\[
\sigma^*\colon T_{L}\to T_{S} \qquad\text{et}\qquad
\tau^*\colon T_{S}\to T_{L}
\]
qui donnent deux suites exactes
\[
1\to T_L/\Gm\to T_S/\Gm\to T_C/\Gm\to 1
\quad\text{et}\quad
1\to T^1_C\to T^1_S\to T^1_L\to 1.
\]
Ces morphismes sont utilis\'es pour obtenir une description de plusieurs
groupes d\'efinis par des normes. En notant $N(L/k)=N_{L/k}(L^\times)$
le groupe des normes de $L/k$, et en d\'efinissant de m\^eme les groupes
de normes $N(S/k)$ et $N(S/C)$, on montre dans la
proposition~\ref{prop:carrenorme}:
\[
\{x\in k^\times\mid x^2\in N(L/k)\} = k^{\times2}\cdot N(S/k)
\quad\text{et}\quad
k^\times\cap N(S/C)=k^{\times2}\cdot N(L/k).
\]
De plus, on donne dans le corollaire~\ref{cor:Brauer} une
param\'etrisation des \'el\'ements de $2$-torsion d\'eploy\'es par $L$ dans le
groupe de Brauer $\Br(k)$: ces \'el\'ements sont des corestrictions
d'alg\`ebres de quaternions sur $C$ d\'eploy\'ees par $S$:
\[
{}_2\Br(L/k)=\{\cor_{C/k}(S/C,x)\mid x\in C^\times\}.
\]
Ces r\'esultats \'etendent aux $k$-alg\`ebres
quartiques arbitraires plusieurs \'enonc\'es qui sont bien connus dans le
cas des extensions biquadratiques, comme le \flqq lemme
biquadratique\frqq\ et le \emph{``Common Slot Theorem''}: voir
l'exemple~\ref{ex:biquadratic}. 
Le th\'eor\`eme~\ref{thm:Albert} montre comment calculer une forme
quadratique d'Albert d'une 
$k$-alg\`ebre de biquaternions d\'eploy\'ee par $L$ \`a partir de sa
description ci-dessus comme corestriction. \`A l'aide de ce th\'eor\`eme,
on retrouve 
dans le corollaire~\ref{cor:quat} la param\'etrisation des 
$k$-alg\`ebres de quaternions d\'eploy\'ees par $L$ donn\'ee dans
\cite[Cor.~22]{HT}, \cite[Th.~6.2]{HS}\footnote{Hoffmann et Sobiech se
  limitent \`a la caract\'eristique~$2$, mais ils envisagent aussi le
  cas o\`u $L$ est une extension ins\'eparable de $k$.} et
\cite[Cor.~4]{S0} (et dans 
\cite[Th.~3.9]{LLT} pour le cas particulier o\`u $L$ est une $2$-extension).
Dans le cas o\`u $k$ est un corps
global, on obtient un principe de Hasse modulo les carr\'es pour les
normes d'extensions quartiques \'etales: voir la
remarque~\ref{rem:Hasse}. Ce principe de Hasse a \'et\'e \'etabli par
Sivatski~\cite{S} par des arguments substantiellement diff\'erents.

Les m\'ethodes utilis\'ees n'imposent aucune restriction sur la
caract\'eristique du corps de base. Sauf mention explicite, $k$ d\'esigne
un corps de base arbitraire.
\bigbreak

\section{$\Gamma$-modules}
\label{sec:ens}

Soit $k_s$ une cl\^oture s\'eparable de $k$ et
$\Gamma=\Gal(k_s/k)$ son groupe de Galois. Par
d\'efinition, un \emph{$\Gamma$-ensemble} est un ensemble fini muni d'une
action continue de $\Gamma$, et un \emph{$\Gamma$-module} est un
$\Z$-module 
de type fini muni d'une action continue de $\Gamma$. Pour tout
$\Gamma$-ensemble $E$, le $\Z$-module libre de base $E$, que l'on
d\'esigne par $\Z[E]$, h\'erite de $E$
une action de $\Gamma$ qui en fait un $\Gamma$-module. Les
modules de ce type sont appel\'es \emph{$\Gamma$-modules de permutation.}
Ils sont \'equip\'es d'homomorphismes \'equivariants 
$\varepsilon_E$ et $\nu_E$ appel\'es respectivement \flqq
augmentation\frqq\ et \flqq norme\frqq
\[
\varepsilon_E\colon \Z[E]\to \Z,\qquad \nu_E\colon \Z\to \Z[E],
\]
d\'efinis sur les \'el\'ements de base par $\varepsilon_E(e)=1$ et
$\nu_E(1)=\sum_{e\in E}e$. On note 
\[
I[E]=\ker\varepsilon_E\qquad\text{et}\qquad J[E]=\coker\nu_E.
\]
On peut identifier tout module de permutation $\Z[E]$ \`a son dual
$\operatorname{Hom}(\Z[E],\Z)$ de sorte que la base $E$ soit
auto-duale. Les homomorphismes $\varepsilon_E$ et $\nu_E$ apparaissent
alors comme duaux l'un de l'autre, et il en est de m\^eme des modules
$I[E]$ et $J[E]$.
\medbreak

Soit \`a pr\'esent $X$ un $\Gamma$-ensemble \`a quatre \'el\'ements. On associe
\`a $X$ le
$\Gamma$-ensemble $\wed^2(X)$
dont les \'el\'ements sont les paires (non ordonn\'ees) d'\'el\'ements de $X$,
et $\R(X)$ le $\Gamma$-ensemble dont les \'el\'ements sont les partitions de
$X$ en sous-ensembles de $2$~\'el\'ements: si $X=\{x_1,x_2,x_3,x_4\}$, alors
\[
\wed^2(X) = \bigl\{
\{x_1,x_2\},\: \{x_1,x_3\},\: \{x_1,x_4\},\:
\{x_2,x_3\},\: \{x_2,x_4\},\: \{x_3,x_4\} \bigr\}
\]
et
\[
\R(X) = \bigl\{ \{\{x_1,x_2\}\{x_3,x_4\}\},\:
\{\{x_1,x_3\}\{x_2,x_4\}\},\:
\{\{x_1,x_4\}\{x_2,x_3\}\} \bigr\}.
\]
L'ensemble $X$ \'etant fix\'e dans la suite, on \'ecrira simplement $\wed^2$
pour $\wed^2(X)$ et $\R$ pour $\R(X)$, afin d'all\'eger les
notations.

Le compl\'ementaire de toute paire d'\'el\'ements de $X$ est aussi une paire
d'\'el\'ements de $X$; on peut donc d\'efinir des applications
$\Gamma$-\'equivariantes comme suit:
\begin{align*}
  \gamma\colon&\wed^2\to\wed^2& \lambda&\mapsto X\setminus\lambda
  & &\text{pour $\lambda\in\wed^2$;}\\
  \pi\colon&\wed^2\to\R&
                               \lambda&\mapsto\{\lambda,\gamma(\lambda)\}
  & &\text{pour $\lambda\in\wed^2$.}
\end{align*}

La projection $\pi\colon\wed^2\to\R$ permet de d\'efinir des
homomorphismes qui sont des analogues
relatifs de l'augmentation et de la norme: on pose
\begin{align*}
  \varepsilon_{{\wed^2}\!/\R}\colon&\Z[\wed^2]\to\Z[\R]&
  \lambda&\mapsto 
  \pi(\lambda) & &\text{pour $\lambda\in\wed^2$;}\\ 
  \nu_{{\wed^2}\!/\R}\colon&\Z[\R]\to\Z[\wed^2]&
  r&\mapsto\sum_{\lambda\in r}\lambda & &\text{pour $r\in\R$}.\\
\intertext{On a aussi un automorphisme \'equivariant}
\Id-\gamma\colon&\Z[\wed^2]\to\Z[\wed^2]& \lambda&\mapsto
  \lambda-\gamma(\lambda) & &\text{pour $\lambda\in\wed^2$.}
\end{align*} 
Choisissons
$\{\lambda_r\mid r\in \R\}$ une section (ensembliste) de $\pi$,
c'est-\`a-dire un ensemble de trois \'el\'ements de $\wed^2$ tels que
$\pi(\lambda_r)=r$ pour tout $r\in \R$. Un \'el\'ement
$\alpha=\sum_{\lambda\in\wed^2}\alpha_\lambda\cdot\lambda\in\Z[\wed^2]$
(avec $\alpha_\lambda\in\Z$ pour $\lambda\in\wed^2$) est dans le noyau
de $\Id-\gamma$ si et seulement si
$\alpha_\lambda=\alpha_{\gamma(\lambda)}$ pour tout
$\lambda\in\wed^2$. Alors $\alpha=\nu_{{\wed^2}\!/\R}\bigl(\sum_{r\in\R}
\alpha_{\lambda_r}\cdot\pi(\lambda_r)\bigr)$. De plus, 
\[
\varepsilon_{{\wed^2}\!/\R}(\alpha) = \sum_{\lambda\in\wed^2}
\alpha_\lambda\cdot\pi(\lambda)=
\sum_{r\in\R}\:\Bigl(\sum_{\pi(\lambda)=r} \alpha_\lambda\Bigr)r,
\]
donc $\varepsilon_{{\wed^2}\!/\R}(\alpha)=0$ si et seulement si
$\alpha_\lambda+\alpha_{\gamma(\lambda)}=0$ pour tout
$\lambda\in\wed^2$. Lorsque cette condition est satisfaite, on a 
\[
\alpha=(\Id-\gamma)\Bigl(\sum_{r\in\R}\alpha_{\lambda_r}\cdot
\lambda_r\Bigr).
\]
Ces observations permettent d'\'etablir l'exactitude de la suite
\begin{equation}
  \label{exseq:1}
  0 \leftarrow \Z[\R] \xleftarrow{\varepsilon_{{\wed^2}\!/\R}}
  \Z[\wed^2] \xleftarrow{\Id-\gamma} \Z[\wed^2]
  \xleftarrow{\nu_{{\wed^2}\!/\R}} \Z[\R] \leftarrow 0.
\end{equation}
D\`es lors, en posant $I[{\wed^2}\!/\R]=\ker\varepsilon_{{\wed^2}\!/\R}$ et
$J[{\wed^2}\!/\R]=\coker\nu_{{\wed^2}\!/\R}$, on voit que l'homomorphisme
$\Id-\gamma$ induit un isomorphisme canonique 
\begin{equation}
 \label{eq:I=J}
I[{\wed^2}\!/\R]\xleftarrow{\sim} J[{\wed^2}\!/\R].
\end{equation}

\subsection{L'homomorphisme $\sigma$}

Avec les notations introduites au d\'ebut de cette section, on consid\`ere
l'homomorphisme \'equivariant
\[
\sigma\colon\Z[\wed^2]\to\Z[X],\qquad
\lambda\mapsto\sum_{x\in\lambda}x \qquad\text{pour
  $\lambda\in\wed^2$.}
\]

\begin{prop}
  \label{prop:1}
  L'homomorphisme $\overline\varepsilon_X\colon \Z[X]\to \Z/2\Z$
  compos\'e de $\varepsilon_X$ et de la r\'eduction modulo~$2$ s'ins\`ere
  dans la suite exacte
  \begin{equation}
  \label{exseq:prop1}
0\leftarrow \Z/2\Z \xleftarrow{\overline\varepsilon_X} \Z[X]
\xleftarrow{\sigma} \Z[\wed^2] \xleftarrow{\nu_{{\wed^2}\!/\R}} I[\R]
\leftarrow 0. 
  \end{equation}
  De plus, les carr\'es suivants sont commutatifs:
  \begin{equation}
  \begin{split}
    \label{diag:1}
    \xymatrix@C=12mm@R=5mm{\Z&\Z[X]\ar[l]_{\varepsilon_X}\\
    \Z\ar[u]^{2}&\Z[\wed^2]\ar[u]_{\sigma}\ar[l]_-{\varepsilon_{\wed^2}}
    }
  \qquad\qquad
  \xymatrix@C=12mm@R=5mm{\Z[X]&\Z\ar[l]_-{\nu_X}\\
  \Z[\wed^2]\ar[u]^{\sigma}&\Z[\R]\ar[l]_-{\nu_{{\wed^2}\!/\R}}
  \ar[u]_{\varepsilon_{\R}}
  }
  \end{split}
  \end{equation}
\end{prop}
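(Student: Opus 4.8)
The plan is to dispatch the two commutative squares by a one‑line verification on basis elements, then to prove exactness of \eqref{exseq:prop1} node by node, extracting the description of $\ker(\Id-\gamma)$ and the injectivity of $\nu_{{\wed^2}\!/\R}$ from the already–established sequence \eqref{exseq:1}. The squares are immediate: for $\lambda\in\wed^2$ one has $\varepsilon_X(\sigma(\lambda))=2=2\,\varepsilon_{\wed^2}(\lambda)$ since $\lambda$ has two elements, and for $r\in\R$ one has $\sigma(\nu_{{\wed^2}\!/\R}(r))=\sum_{\lambda\in r}\sum_{x\in\lambda}x=\sum_{x\in X}x=\nu_X(\varepsilon_\R(r))$ since the two pairs of the partition $r$ cover $X$ exactly once; the very same count gives the auxiliary identity $\varepsilon_{\wed^2}\circ\nu_{{\wed^2}\!/\R}=2\,\varepsilon_\R$, which I will use below.

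For \eqref{exseq:prop1}, as all the maps are $\Gamma$-equivariant it suffices to check exactness of the underlying sequence of abelian groups. Exactness at $\Z/2\Z$ is the surjectivity of $\overline\varepsilon_X$, which is clear. At $\Z[X]$: the first square shows $\overline\varepsilon_X\circ\sigma=0$ (the augmentation of $\sigma(\lambda)$ is the even integer $2$); conversely $\operatorname{im}\sigma$ contains every $x+x'=\sigma(\{x,x'\})$, hence every difference $x-x'=(x+x'')-(x'+x'')$ with $x''\notin\{x,x'\}$, hence $I[X]$, together with one element of augmentation $2$; since $\ker\overline\varepsilon_X=I[X]+\Z\cdot(x+x')$ (any $\beta$ with $\varepsilon_X(\beta)=2m$ satisfies $\beta-m(x+x')\in I[X]$), we conclude $\operatorname{im}\sigma=\ker\overline\varepsilon_X$. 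At $I[\R]$: $\nu_{{\wed^2}\!/\R}$ is injective by \eqref{exseq:1}, hence so is its restriction to $I[\R]$.

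Exactness at $\Z[\wed^2]$ is the heart of the matter. The inclusion $\nu_{{\wed^2}\!/\R}(I[\R])\subseteq\ker\sigma$ is formal: by the second square $\sigma\circ\nu_{{\wed^2}\!/\R}=\nu_X\circ\varepsilon_\R$ kills $I[\R]=\ker\varepsilon_\R$. For the reverse inclusion, let $\alpha=\sum_\lambda\alpha_\lambda\lambda\in\ker\sigma$. First, $\varepsilon_{\wed^2}(\alpha)=0$: by the first square $2\,\varepsilon_{\wed^2}(\alpha)=\varepsilon_X(\sigma(\alpha))=0$ and $\Z$ is torsion‑free. Second, $(\Id-\gamma)(\alpha)=0$: the coefficient of each $x\in X$ in $\sigma(\alpha)$ is $\sum_{\lambda\ni x}\alpha_\lambda$, so these four sums vanish; fixing a pair $\lambda_0$ and writing $X=\lambda_0\sqcup\gamma(\lambda_0)$, adding the two relations attached to the elements of $\lambda_0$ and subtracting those attached to the elements of $\gamma(\lambda_0)$ makes the cross terms cancel and leaves $2\alpha_{\lambda_0}=2\alpha_{\gamma(\lambda_0)}$, whence $\alpha_\lambda=\alpha_{\gamma(\lambda)}$ for all $\lambda$. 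Thus $\alpha\in\ker(\Id-\gamma)=\nu_{{\wed^2}\!/\R}(\Z[\R])$ by \eqref{exseq:1}, say $\alpha=\nu_{{\wed^2}\!/\R}(\beta)$ with $\beta\in\Z[\R]$; then $0=\varepsilon_{\wed^2}(\alpha)=2\,\varepsilon_\R(\beta)$ by the auxiliary identity, so $\varepsilon_\R(\beta)=0$, i.e.\ $\beta\in I[\R]$, and $\alpha\in\nu_{{\wed^2}\!/\R}(I[\R])$. This completes the proof of exactness.

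I expect the only point requiring genuine thought to be the inclusion $\ker\sigma\subseteq\ker(\Id-\gamma)$ in the previous paragraph — morally, that $\sigma$ records only the $\gamma$-symmetric part of a chain on $\wed^2$ — everything else being a diagram chase resting on the two squares and on \eqref{exseq:1}. As a consistency check, the ranks along \eqref{exseq:prop1} alternate to zero ($2-6+4-0=0$), and no torsion subtlety arises because $\Z$ is torsion‑free at each place where a factor $2$ is cancelled.
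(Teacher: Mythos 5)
Your proof is correct, and while the overall architecture (verify the two squares on basis elements, reduce exactness to the two middle spots, lean on the already-proved sequence \eqref{exseq:1}) matches the paper's, your treatment of exactness at $\Z[X]$ is genuinely different and simpler. The paper proves $\ker\overline\varepsilon_X\subseteq\operatorname{im}\sigma$ by an explicit construction: given $\xi$ with $\varepsilon_X(\xi)=2\zeta$, it builds an element $\alpha\in\ker\varepsilon_{{\wed^2}\!/\R}$, lifts it through $\Id-\gamma$ via \eqref{exseq:1} to some $\beta\in\Z[\wed^2]$, and then adjusts $\beta$ modulo $\operatorname{im}\nu_{{\wed^2}\!/\R}$ (using $\varepsilon_{\wed^2}(\operatorname{im}\nu_{{\wed^2}\!/\R})=2\Z$) until $\sigma(\beta)=\xi$ --- a computation occupying the second half of the printed proof. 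You instead identify $\operatorname{im}\sigma$ directly as the subgroup of $\Z[X]$ generated by the sums $x+x'$, observe that it contains all differences $x-x'=(x+x'')-(x'+x'')$ and hence all of $I[X]$, and conclude $\operatorname{im}\sigma=I[X]+\Z\cdot(x+x')=\ker\overline\varepsilon_X$. This buys a shorter, more structural argument; the paper's construction buys an explicit preimage of a given $\xi$, which is occasionally convenient but not needed for the statement. Your argument at $\Z[\wed^2]$ (the parity trick $2\alpha_{\lambda_0}-2\alpha_{\gamma(\lambda_0)}=\sum_{x\in\lambda_0}\sum_{\lambda\ni x}\alpha_\lambda-\sum_{x\in\gamma(\lambda_0)}\sum_{\lambda\ni x}\alpha_\lambda$, then \eqref{exseq:1}) is the same as the paper's, with the only cosmetic difference that you detect $\beta\in I[\R]$ via the identity $\varepsilon_{\wed^2}\circ\nu_{{\wed^2}\!/\R}=2\varepsilon_\R$ where the paper uses the right-hand square of \eqref{diag:1} and the injectivity of $\nu_X$.
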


\begin{proof}
  La commutativit\'e des diagrammes r\'esulte de calculs directs; la
  nullit\'e de la suite en d\'ecoule. Comme il est clair que
  $\nu_{{\wed^2}\!/\R}$ est injective et $\overline\varepsilon_X$ est
  surjective, il reste seulement \`a \'etablir l'exactitude de la
  suite~\eqref{exseq:prop1} en $\Z[\wed^2]$ et en $\Z[X]$.

  Pour $\alpha=\sum_{\lambda\in\wed^2}\alpha_\lambda\cdot\lambda\in
  \Z[\wed^2]$ (avec $\alpha_\lambda\in\Z$ pour tout
  $\lambda\in\wed^2$), on a
  \[
  \sigma(\alpha)=\sum_{\lambda\in\wed^2}\alpha_\lambda
  \Bigl(\sum_{x\in\lambda}x\Bigr) = \sum_{x\in
    X}\:\Bigl(\sum_{\lambda\ni x}\alpha_\lambda\Bigr)x,
  \]
  donc $\sigma(\alpha)=0$ si et seulement si $\sum_{\lambda\ni
    x}\alpha_\lambda=0$ pour tout $x\in X$. Notons
  $X=\{x_1,\,x_2,\,x_3,\,x_4\}$. Comme
  \[
  2\alpha_{\{x_1,x_2\}}-2\alpha_{\{x_3,x_4\}} = \Bigl(\sum_{\lambda\ni
    x_1}\alpha_\lambda\Bigr) + \Bigl(\sum_{\lambda\ni
    x_2}\alpha_\lambda\Bigr) - \Bigl(\sum_{\lambda\ni
    x_3}\alpha_\lambda\Bigr) - \Bigl(\sum_{\lambda\ni
    x_4}\alpha_\lambda\Bigr),
  \]
  on voit que $\alpha_{\{x_1,x_2\}}=\alpha_{\{x_3,x_4\}}$ pour
  $\alpha\in\ker\sigma$. De m\^eme,
  $\alpha_{\{x_1,x_3\}}=\alpha_{\{x_2,x_4\}}$ et $\alpha_{\{x_1,x_4\}}
  = \alpha_{\{x_2,x_3\}}$ pour $\alpha\in\ker\sigma$, donc
  $\alpha=\gamma(\alpha)$ si $\sigma(\alpha)=0$. Vu l'exactitude de la
  suite \eqref{exseq:1}, on peut donc trouver pour tout
  $\alpha\in\ker\sigma$ un \'el\'ement $\beta\in\Z[\R]$ tel que $\alpha=
  \nu_{{\wed^2}\!/\R}(\beta)$. La commutativit\'e du diagramme de droite
  de \eqref{diag:1} montre que $\varepsilon_\R(\beta)=0$, donc
  $\beta\in I[\R]$. La suite~\eqref{exseq:prop1} est donc exacte en
  $\Z[\wed^2]$.

  Pour \'etablir l'exactitude en $\Z[X]$, consid\'erons $\xi=\sum_{x\in
    X}\xi_x\cdot x\in \Z[X]$ (avec $\xi_x\in\Z$ pour tout $x\in X$)
  tel que $\sum_{x\in X}\xi_x=2\zeta$ pour un certain
  $\zeta\in\Z$. Pour tout $\lambda\in\wed^2$, posons
  \[
  \alpha_\lambda= \Bigl(\sum_{x\in\lambda}\xi_x\Bigr)-\zeta=
  \frac12\Bigl(\sum_{x\in\lambda} \xi_x -
  \sum_{y\in\gamma(\lambda)}\xi_y\Bigr) \in\Z.
  \]
  Alors $\alpha_{\gamma(\lambda)}=-\alpha_\lambda$ pour tout
  $\lambda\in\wed^2$, donc l'\'el\'ement
  $\alpha=\sum_{\lambda\in\wed^2}\alpha_\lambda\cdot\lambda$ est dans
  le noyau de $\varepsilon_{{\wed^2}\!/\R}$. Vu l'exactitude de la
  suite \eqref{exseq:1}, on peut trouver
  $\beta=\sum_{\lambda\in\wed^2}\beta_\lambda\cdot\lambda\in\Z[\wed^2]$
  tel que $\alpha=\beta-\gamma(\beta)$, ce qui signifie que
  \[
  \beta_\lambda-\beta_{\gamma(\lambda)} = {\frac12}
  \Bigl(\sum_{x\in\lambda}\xi_x -
  \sum_{y\in\gamma(\lambda)}\xi_y\Bigr)
  \quad\text{pour tout $\lambda\in\wed^2$.}
  \]
  Pour un \'el\'ement $z\in X$ fix\'e on a alors d'une part
  \[
  \sum_{\lambda\ni z}(\beta_\lambda-\beta_{\gamma(\lambda)}) =
  \sum_{\lambda\ni z}\beta_\lambda - \sum_{\lambda\not\ni
    z}\beta_\lambda = 2\Bigl(\sum_{\lambda\ni z}\beta_\lambda\Bigr)
  -\varepsilon_{\wed^2}(\beta),
  \]
  et d'autre part
  \[
  \sum_{\lambda\ni z}(\beta_\lambda-\beta_{\gamma(\lambda)}) =
  {\frac12}\sum_{\lambda\ni z}\:
  \Bigl(\sum_{x\in\lambda}\xi_x -
  \sum_{y\in\gamma(\lambda)}\xi_y\Bigr) = {\frac12}
  \bigl(3\xi_z - \sum_{x\neq z}\xi_x\bigr) = 2\xi_z-\zeta.
  \]
  En comparant ces \'equations, on obtient
  \begin{equation}
    \label{eq:compa}
    2\Bigl(\sum_{\lambda\ni z}\beta_\lambda\Bigr) -
    \varepsilon_{\wed^2}(\beta) = 2\xi_z-\zeta,
  \end{equation}
  ce qui montre que
  $\varepsilon_{\wed^2}(\beta)\equiv\zeta\bmod2$. Or, la condition
  $\alpha=\beta-\gamma(\beta)$ ne d\'etermine $\beta$ qu'\`a un \'el\'ement de
  l'image de $\nu_{{\wed^2}\!/\R}$ pr\`es, et
  $\varepsilon_{\wed^2}(\operatorname{im}\nu_{{\wed^2}\!/\R})=2\Z$,
  donc quitte \`a ajuster $\beta$ on peut supposer
  $\varepsilon_{\wed^2}(\beta)=\zeta$. L'\'equation~\eqref{eq:compa},
  qui vaut pour tout $z\in X$, donne alors $\sum_{\lambda\ni
    z}\beta_\lambda = \xi_z$, donc $\xi=\sigma(\beta)$.
\end{proof}

\begin{corol}
  \label{corol:1}
  La restriction de $\sigma$ \`a $I[\wed^2]$ est un homomorphisme
  $\sigma_I\colon I[\wed^2]\to I[X]$ qui s'ins\`ere dans la suite
  exacte suivante:
  \begin{equation}
  \label{exseq:corol2}
  0 \leftarrow I[X] \xleftarrow{\sigma_I} I[\wed^2]
  \xleftarrow{\nu_{{\wed^2}\!/\R}} I[\R] \leftarrow 0.
  \end{equation}
\end{corol}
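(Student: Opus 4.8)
The plan is to deduce everything from Proposition~\ref{prop:1}: the exact sequence~\eqref{exseq:prop1} and the commutative squares~\eqref{diag:1} already contain all the information needed, and the argument consists in restricting that sequence to the augmentation kernels. First I would check that $\sigma$ really does restrict to a homomorphism $I[\wed^2]\to I[X]$. The left-hand square of~\eqref{diag:1} says $\varepsilon_X\circ\sigma=2\,\varepsilon_{\wed^2}$, so for $\alpha\in I[\wed^2]=\ker\varepsilon_{\wed^2}$ one gets $\varepsilon_X(\sigma(\alpha))=0$, i.e.\ $\sigma(\alpha)\in I[X]$; this defines $\sigma_I$.

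Next I would verify that $\nu_{{\wed^2}\!/\R}$ maps $I[\R]$ into $I[\wed^2]$, so that~\eqref{exseq:corol2} makes sense. Combining the two squares of~\eqref{diag:1} with $\varepsilon_X\circ\nu_X=4$ gives $2\,\varepsilon_{\wed^2}\circ\nu_{{\wed^2}\!/\R}=\varepsilon_X\circ\sigma\circ\nu_{{\wed^2}\!/\R}=\varepsilon_X\circ\nu_X\circ\varepsilon_\R=4\,\varepsilon_\R$, and since $\Z[\wed^2]$ is torsion-free this yields $\varepsilon_{\wed^2}\circ\nu_{{\wed^2}\!/\R}=2\,\varepsilon_\R$; hence $\nu_{{\wed^2}\!/\R}(I[\R])\subseteq\ker\varepsilon_{\wed^2}=I[\wed^2]$. (One may also see this directly: $\nu_{{\wed^2}\!/\R}(r)$ is a sum of two basis elements of $\Z[\wed^2]$ for each $r\in\R$.)

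Finally I would establish exactness at each spot of~\eqref{exseq:corol2}. Injectivity of $\nu_{{\wed^2}\!/\R}$ on $I[\R]$ is inherited from its injectivity on all of $\Z[\R]$. For exactness at $I[\wed^2]$: by~\eqref{exseq:prop1} one has $\ker\sigma=\operatorname{im}\bigl(\nu_{{\wed^2}\!/\R}|_{I[\R]}\bigr)$, and since this image already lies in $I[\wed^2]$ we get $\ker\sigma_I=I[\wed^2]\cap\ker\sigma=\nu_{{\wed^2}\!/\R}(I[\R])$. For surjectivity of $\sigma_I$ onto $I[X]$: given $\xi\in I[X]$ we have $\overline\varepsilon_X(\xi)=0$, so by exactness of~\eqref{exseq:prop1} at $\Z[X]$ there is $\beta\in\Z[\wed^2]$ with $\sigma(\beta)=\xi$; then $2\,\varepsilon_{\wed^2}(\beta)=\varepsilon_X(\sigma(\beta))=\varepsilon_X(\xi)=0$ forces $\beta\in I[\wed^2]$, whence $\sigma_I(\beta)=\xi$. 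There is no genuine obstacle here; the only point requiring a moment's care is that the incoming map at $\Z[\wed^2]$ in~\eqref{exseq:prop1} is \emph{already} $\nu_{{\wed^2}\!/\R}$ restricted to $I[\R]$, so no extra intersection argument is needed at that spot — the content of the corollary is essentially that~\eqref{exseq:prop1} is the "mod $2$ augmentation" version of~\eqref{exseq:corol2}.
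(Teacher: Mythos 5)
Your proof is correct and takes essentially the same route as the paper: both deduce the corollary from Proposition~\ref{prop:1} together with the left-hand square of~\eqref{diag:1}, the paper simply packaging the verification as one application of the snake lemma to the diagram obtained by extending that square to the short exact sequences $0\to I[X]\to\Z[X]\to\Z\to0$ and $0\to I[\wed^2]\to\Z[\wed^2]\to\Z\to0$ (using $\ker(2)=0$ and $\coker(2)=\coker\sigma$). Your hand-checks --- that $\sigma$ and $\nu_{{\wed^2}\!/\R}$ restrict to the augmentation kernels, that $\ker\sigma$ already lies in $I[\wed^2]$, and that a preimage of $\xi\in I[X]$ under $\sigma$ is automatically in $I[\wed^2]$ because $2\varepsilon_{\wed^2}(\beta)=\varepsilon_X(\xi)=0$ --- are exactly the steps the snake lemma automates, so the two arguments coincide in substance.
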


\begin{proof}
  Le diagramme de gauche de \eqref{diag:1} s'\'etend en un diagramme
  commutatif dont les lignes sont exactes:
  \[
  \xymatrix@C=12mm@R=5mm{0&\Z\ar[l]& \Z[X]\ar[l]_{\varepsilon_X}& I[X]\ar[l]&
    0\ar[l]\\
  0&\Z\ar[l]\ar[u]^{2}&\Z[\wed^2]\ar[l]_{\varepsilon_{\wed^2}}
  \ar[u]^{\sigma} &
  I[\wed^2]\ar[l] \ar[u]_{\sigma_I}&0\ar[l]
  }
  \]
  Comme $\coker(2)=\coker\sigma$ et $\ker(2)=0$, le corollaire
  s'obtient en appliquant le lemme du serpent \`a ce diagramme.
\end{proof}

On peut raisonner de m\^eme avec le diagramme de droite de
\eqref{diag:1}. On obtient une suite exacte
\[
0\leftarrow \Z/2\Z \leftarrow J[X] \leftarrow J[\wed^2/\R] \leftarrow 0
\]
avec des homomorphismes induits par $\overline\varepsilon_X$ et par
$\sigma$. Il est cependant utile d'\'eliminer les \'el\'ements de
torsion en modifiant quelque peu $\sigma$ et $\nu_X$: d'abord, on peut
remplacer la suite exacte~\eqref{exseq:prop1} par la suivante:
\begin{equation}
\label{exseq:prop1bis}
0 \leftarrow \Z \xleftarrow{(\varepsilon_X,-2)} \Z[X]\times\Z
\xleftarrow{\sigma\times\varepsilon_{\wed^2}} \Z[\wed^2]
\xleftarrow{\nu_{{\wed^2}\!/\R}} I[\R] \leftarrow0.
\end{equation}
Ensuite, on peut remplacer le carr\'e de droite de \eqref{diag:1} par le suivant:
\begin{equation}
  \label{diag:2}
  \begin{split}
  \xymatrix@C=12mm@R=5mm{
  \Z[X]\times\Z&\Z\ar[l]_-{(\nu,2)}\\
  \Z[\wed^2]\ar[u]^{\sigma\times\varepsilon_{\wed^2}} &
  \Z[\R]\ar[l]_-{\nu_{{\wed^2}\!/\R}} \ar[u]_{\varepsilon_\R}}
  \end{split}
\end{equation}

\begin{corol}
  \label{corol:2}
  Soit $J'[X]=\coker(\nu,2)$. On a une suite exacte
  \begin{equation}
  \label{exseq:corol3}
  0\leftarrow \Z\xleftarrow{\varepsilon'}J'[X] \xleftarrow{\sigma'}
  J[{\wed^2}\!/\R] \leftarrow 0
  \end{equation}
  o\`u les homomorphismes $\varepsilon'$ et $\sigma'$ sont induits par
  $(\varepsilon_X,-2)$ et $\sigma\times\varepsilon_{\wed^2}$
  respectivement. 
\end{corol}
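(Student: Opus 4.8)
The plan is to reproduce, almost word for word, the argument used for Corollaire~\ref{corol:1}, with the exact sequence~\eqref{exseq:prop1bis} and the commutative square~\eqref{diag:2} now playing the roles that \eqref{exseq:prop1} and the right-hand square of~\eqref{diag:1} played there. First I would observe that~\eqref{diag:2} extends to a commutative diagram with exact rows
\[
\xymatrix@C=12mm@R=5mm{
0&J'[X]\ar[l]&\Z[X]\times\Z\ar[l]&\Z\ar[l]_-{(\nu,2)}&0\ar[l]\\
0&J[{\wed^2}\!/\R]\ar[l]\ar[u]^{\sigma'}&\Z[\wed^2]\ar[l]\ar[u]^{\sigma\times\varepsilon_{\wed^2}}&\Z[\R]\ar[l]_-{\nu_{{\wed^2}\!/\R}}\ar[u]_{\varepsilon_\R}&0\ar[l]
}
\]
the rows being exact because $(\nu,2)$ and $\nu_{{\wed^2}\!/\R}$ are injective while $J'[X]$ and $J[{\wed^2}\!/\R]$ are, by definition, the cokernels of these two maps; the left-hand square commutes by the very definition of $\sigma'$, and the right-hand square is~\eqref{diag:2}. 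The snake lemma applied to this diagram then produces an exact sequence
\[
0\to\ker\varepsilon_\R\to\ker(\sigma\times\varepsilon_{\wed^2})\to\ker\sigma'\to\coker\varepsilon_\R\to\coker(\sigma\times\varepsilon_{\wed^2})\to\coker\sigma'\to0.
\]

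Next I would read off the terms. The augmentation $\varepsilon_\R$ is surjective with kernel $I[\R]$, so $\coker\varepsilon_\R=0$; and the exactness of~\eqref{exseq:prop1bis} tells us that $\ker(\sigma\times\varepsilon_{\wed^2})=\nu_{{\wed^2}\!/\R}(I[\R])$ and that $\coker(\sigma\times\varepsilon_{\wed^2})$ is, via the map induced by $(\varepsilon_X,-2)$, isomorphic to $\Z$. The one point that needs a little care is that the leftmost arrow $\ker\varepsilon_\R\to\ker(\sigma\times\varepsilon_{\wed^2})$ of the snake sequence, being induced by $\nu_{{\wed^2}\!/\R}$, is precisely the isomorphism $I[\R]\xrightarrow{\sim}\nu_{{\wed^2}\!/\R}(I[\R])$ supplied by~\eqref{exseq:prop1bis}; granting this, the snake sequence collapses to $\ker\sigma'=0$ together with an isomorphism $\coker(\sigma\times\varepsilon_{\wed^2})\xrightarrow{\sim}\coker\sigma'$. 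A short diagram chase then identifies the resulting surjection $J'[X]\to\coker\sigma'$, under the isomorphisms $\coker\sigma'\cong\coker(\sigma\times\varepsilon_{\wed^2})\cong\Z$, with the homomorphism $\varepsilon'$ induced by $(\varepsilon_X,-2)$, which gives~\eqref{exseq:corol3}.

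Alternatively, one can bypass the snake lemma and verify exactness by hand: $\varepsilon'$ is well defined on $J'[X]=\coker(\nu,2)$ because $(\varepsilon_X,-2)$ annihilates $(\nu,2)(1)$, and is surjective because $(\varepsilon_X,-2)$ is; $\operatorname{im}\sigma'=\ker\varepsilon'$ follows from the identity $\ker(\varepsilon_X,-2)=\operatorname{im}(\sigma\times\varepsilon_{\wed^2})$ of~\eqref{exseq:prop1bis} together with the inclusion $\operatorname{im}(\nu,2)\subseteq\operatorname{im}(\sigma\times\varepsilon_{\wed^2})$, which holds since $(\nu,2)\circ\varepsilon_\R=(\sigma\times\varepsilon_{\wed^2})\circ\nu_{{\wed^2}\!/\R}$ and $\varepsilon_\R$ is onto; and $\sigma'$ is injective because, if $(\sigma\times\varepsilon_{\wed^2})(\beta)\in\operatorname{im}(\nu,2)=(\sigma\times\varepsilon_{\wed^2})\bigl(\operatorname{im}\nu_{{\wed^2}\!/\R}\bigr)$, then $\beta$ differs from an element of $\operatorname{im}\nu_{{\wed^2}\!/\R}$ by an element of $\ker(\sigma\times\varepsilon_{\wed^2})\subseteq\operatorname{im}\nu_{{\wed^2}\!/\R}$. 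I expect the only genuinely delicate bookkeeping to be the verification that the first arrow of the snake sequence is an isomorphism onto $\ker(\sigma\times\varepsilon_{\wed^2})$ (equivalently, that $\ker(\sigma\times\varepsilon_{\wed^2})$ is no larger than $\nu_{{\wed^2}\!/\R}(I[\R])$); every other step is the same chase already performed for Corollaire~\ref{corol:1}.
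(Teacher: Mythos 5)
Your proposal is correct and follows exactly the paper's route: the paper's entire proof consists of applying the snake lemma to the very diagram you wrote (its \eqref{diag:corol3}), with the kernel and cokernel of $\sigma\times\varepsilon_{\wed^2}$ read off from \eqref{exseq:prop1bis} just as you do. Your extra care about the first connecting arrow being surjective onto $\ker(\sigma\times\varepsilon_{\wed^2})$, and the alternative hands-on verification, are sound but only make explicit what the paper leaves to the reader.
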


\begin{proof}
  On applique le lemme du serpent au diagramme commutatif suivant,
  obtenu en \'etendant le diagramme~\eqref{diag:2}:
  \begin{equation}
  \label{diag:corol3}
  \begin{split}
  \xymatrix@R=5mm{
  0&J'[X]\ar[l]&\Z[X]\times\Z\ar[l] &\Z\ar[l]_-{(\nu,2)}&0\ar[l]\\
  0&J[{\wed^2}\!/\R]\ar[l]\ar[u]^{\sigma'} & \Z[\wed^2]\ar[l]
  \ar[u]^{\sigma\times\varepsilon_{\wed^2}} & \Z[\R]\ar[l]
    \ar[u]_{\varepsilon_\R} &0\ar[l]
  }
  \end{split}
  \end{equation}
\end{proof}

\subsection{L'homomorphisme $\tau$}

Rappelons que les $\Gamma$-modules de permutation sont auto-duaux. On
peut donc d\'efinir un homomorphisme $\tau\colon\Z[X]\to\Z[\wed^2]$ en
prenant le dual de $\sigma$. De mani\`ere explicite,
\[
\tau(x)=\sum_{\lambda\ni x}\lambda\qquad\text{pour $x\in X$.}
\]
Par dualit\'e \`a partir de~\eqref{diag:1} (ou par un calcul direct) on
obtient les carr\'es commutatifs suivants:
\begin{equation}
  \begin{split}
    \label{diag:3}
    \xymatrix@C=12mm@R=5mm{\Z[\wed^2]&\Z\ar[l]_-{\nu_{\wed^2}}\\
    \Z[X]\ar[u]^{\tau}&\Z\ar[u]_{2}\ar[l]_-{\nu_{X}}
    }
  \qquad\qquad
  \xymatrix@C=12mm@R=5mm{\Z[\R]&\Z[\wed^2]
    \ar[l]_-{\varepsilon_{{\wed^2}\!/\R}}\\  
  \Z\ar[u]^{\nu_\R}&\Z[X]\ar[l]_-{\varepsilon_X}
  \ar[u]_{\varepsilon_{\R}}
  }
  \end{split}
\end{equation}
On a aussi une suite exacte duale de \eqref{exseq:prop1bis}:
\begin{equation}
\label{exseq:prop1bisdual}
0\leftarrow J[\R] \xleftarrow{\varepsilon_{{\wed^2}\!/\R}} \Z[\wed^2]
\xleftarrow{(\tau,\nu_{\wed^2})} \Z[X]\times\Z
\xleftarrow{(\nu_X\times-2)} \Z\leftarrow0.
\end{equation}

\begin{prop}
  \label{prop:2}
  L'homomorphisme $\tau$ induit un homomorphisme $\tau_J\colon J[X]\to
  J[\wed^2]$ et la suite suivante est exacte:
  \begin{equation}
    \label{exseq:prop2}
  0\leftarrow J[\R] \xleftarrow{\varepsilon_{{\wed^2}\!/\R}} J[\wed^2]
  \xleftarrow{\tau_J} J[X] \leftarrow0.
  \end{equation}
\end{prop}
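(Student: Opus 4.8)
The plan is to recognise~\eqref{exseq:prop2} as the $\Z$-dual of the exact sequence~\eqref{exseq:corol2} of Corollaire~\ref{corol:1}; equivalently, and more economically, to read it off the already-established exact sequence~\eqref{exseq:prop1bisdual}, which is the route I would take. First I would check that $\tau$ really induces $\tau_J$: the left square of~\eqref{diag:3} gives $\tau\circ\nu_X=2\,\nu_{\wed^2}$, so $\tau$ carries $\operatorname{im}\nu_X$ into $\operatorname{im}\nu_{\wed^2}$ and therefore descends to a map $J[X]=\Z[X]/\operatorname{im}\nu_X\to\Z[\wed^2]/\operatorname{im}\nu_{\wed^2}=J[\wed^2]$. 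Next, since $\pi$ is two-to-one one has $\varepsilon_{{\wed^2}\!/\R}\circ\nu_{\wed^2}=2\,\nu_\R$, so the composite of $\varepsilon_{{\wed^2}\!/\R}$ with the projection $\Z[\R]\to J[\R]$ annihilates $\operatorname{im}\nu_{\wed^2}$ and induces a map $J[\wed^2]\to J[\R]$; this is the arrow $\varepsilon_{{\wed^2}\!/\R}$ appearing in~\eqref{exseq:prop2}.

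Now I would extract the exactness from~\eqref{exseq:prop1bisdual}. That sequence asserts that the map $\Z[\wed^2]\to J[\R]$ just described is surjective with kernel $\operatorname{im}(\tau,\nu_{\wed^2})=\tau(\Z[X])+\operatorname{im}\nu_{\wed^2}$. Passing to the quotient $J[\wed^2]=\Z[\wed^2]/\operatorname{im}\nu_{\wed^2}$, the induced map $\varepsilon_{{\wed^2}\!/\R}\colon J[\wed^2]\to J[\R]$ remains surjective, and its kernel is $\bigl(\tau(\Z[X])+\operatorname{im}\nu_{\wed^2}\bigr)/\operatorname{im}\nu_{\wed^2}=\operatorname{im}\tau_J$; hence~\eqref{exseq:prop2} is exact at $J[\R]$ and at $J[\wed^2]$. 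It remains to prove $\tau_J$ injective: if $\xi\in\Z[X]$ satisfies $\tau(\xi)=\nu_{\wed^2}(m)$ for some $m\in\Z$, then $(\xi,-m)\in\ker(\tau,\nu_{\wed^2})$, which by exactness of~\eqref{exseq:prop1bisdual} at $\Z[X]\times\Z$ equals $\operatorname{im}(\nu_X\times-2)$, so $\xi\in\nu_X(\Z)$; thus $\tau^{-1}(\nu_{\wed^2}(\Z))=\nu_X(\Z)$ and $\ker\tau_J=0$, as wanted.

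The argument is entirely formal, so no step is a genuine obstacle: all the content sits in~\eqref{exseq:prop1bisdual} (itself the dual of~\eqref{exseq:prop1bis}), and, as throughout the paper, the characteristic of $k$ is irrelevant. The only point that needs a moment's care is the bookkeeping identifying the two outer arrows of~\eqref{exseq:prop2} with the maps induced on cokernels---namely that $\varepsilon_{{\wed^2}\!/\R}$ kills $\operatorname{im}\nu_{\wed^2}$ and that $\tau_J$ is well defined---but both are immediate from~\eqref{diag:3} and the two-to-one property of $\pi$. A reader preferring the conceptual version can instead dualize~\eqref{exseq:corol2} directly: the modules $I[X]$, $I[\wed^2]$, $I[\R]$ are free over $\Z$, so $\Hom_\Z(-,\Z)$ preserves its exactness, and under the canonical identifications $I[E]^\vee=J[E]$ the dual of $\sigma_I$ is $\tau_J$ while the dual of $\nu_{{\wed^2}\!/\R}$ is $\varepsilon_{{\wed^2}\!/\R}$; there the only thing to check is the naturality of these duals with respect to the inclusions $I[E]\hookrightarrow\Z[E]$.
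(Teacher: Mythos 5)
Your argument is correct and rests on exactly the same input as the paper's, namely the exact sequence~\eqref{exseq:prop1bisdual}: where you pass to the quotient by $\operatorname{im}\nu_{\wed^2}$ and chase kernels and images directly, the paper packages the identical bookkeeping as a snake-lemma application to the diagram comparing $0\to\Z\to\Z[\wed^2]\to J[\wed^2]\to0$ with $0\to\Z\times\Z\to\Z[X]\times\Z\to J[X]\to0$. The difference is purely presentational, and your injectivity check for $\tau_J$ via $\ker(\tau,\nu_{\wed^2})=\operatorname{im}(\nu_X\times-2)$ is exactly what the snake lemma extracts.
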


\begin{proof}
  Le noyau et le conoyau de $(\tau,\nu_{\wed^2})$ \'etant donn\'es par la
  suite~\eqref{exseq:prop1bisdual}, il suffit d'appliquer le lemme du
  serpent au diagramme suivant, dont les lignes sont exactes:
  \[
  \xymatrix@C=12mm@R=5mm{
  0&J[\wed^2]\ar[l]&\Z[\wed^2]\ar[l]&\Z\ar[l]_-{\nu_{\wed^2}}&0\ar[l]\\
  0&J[X]\ar[l]\ar[u]^{\tau_J}&\Z[X]\times\Z\ar[l]_-{(\text{can},0)}
  \ar[u]_{(\tau,\nu_{\wed^2})}&
  \Z\times\Z\ar[l]_-{\nu_X\times\Id_\Z}\ar[u]_{(2,\Id_\Z)}&0\ar[l]
  }
  \]
\end{proof}

\section{Tores}
\label{sec:tores}

Comme dans la section pr\'ec\'edente, on d\'esigne par $k$ un corps
arbitraire; on fixe une cl\^oture s\'eparable $k_s$ de $k$ et on pose
$\Gamma=\Gal(k_s/k)$. En associant \`a toute $k$-alg\`ebre \'etale $A$
l'ensemble $E(A)=\Hom_{\text{$k$-alg}}(A,k_s)$ des homomorphismes de
$k$-alg\`ebres de $A$ dans $k_s$, on d\'efinit une anti-\'equivalence de
cat\'egories entre les $k$-alg\`ebres \'etales et les $\Gamma$-ensembles. De
m\^eme, on a une anti-\'equivalence de cat\'egories entre $k$-tores et
$\Gamma$-modules, qui associe \`a tout $k$-tore $T$ le module de ses
caract\`eres $\widehat T=\Hom(T_s,\Gmks)$. Ces deux anti-\'equivalences
sont li\'ees de la mani\`ere suivante: toute $k$-alg\`ebre \'etale $A$ d\'efinit
d'une part un $\Gamma$-ensemble $E(A)$ et d'autre part un $k$-tore
\[
T_A=R_{A/k}(\GmA)
\]
obtenu \`a partir du groupe multiplicatif $\GmA$ de $A$ par restriction
des scalaires (\`a la Weil) de $A$ \`a $k$. Ces deux constructions sont
li\'ees par
\[
\widehat T_A=\Z[E(A)].
\]
La correspondance fonctorielle entre $\Gamma$-modules et tores associe
\`a l'augmentation $\varepsilon_{E(A)}$ l'inclusion $i_A\colon\Gm\to
T_A$, et \`a 
la norme $\nu_{E(A)}$ la norme $N_A\colon T_A\to\Gm$. D\`es lors, le
tore correspondant au module $I[E(A)]$ est le quotient $T_A/\Gm$, et
$J[E(A)]$ correspond au tore $R^1_{A/k}(\GmA)$ noyau de la norme
$N_A$, que l'on notera simplement $T^1_A$.
\medbreak

\`A partir d'ici, fixons une $k$-alg\`ebre \'etale $L$ de dimension~$4$,
et notons simplement $X$ pour le $\Gamma$-ensemble $E(L)$, qui a
quatre \'el\'ements. L'extension quadratique r\'esolvante $S/C$ de $L$
est d\'efinie \`a isomorphisme pr\`es par
les conditions
\begin{equation}
\label{eq:defSC}
E(S)=\wed^2(X)\qquad\text{et}\qquad E(C)=\R(X).
\end{equation}
(L'application \'equivariante $\pi\colon\wed^2(X)\to\R(X)$ du
\S\ref{sec:ens} d\'efinit un homomorphisme injectif d'alg\`ebres
$C\to S$ que l'on consid\`ere comme une inclusion.) Les
homomorphismes $\sigma$ et $\tau$ du \S\ref{sec:ens} donnent des
homomorphismes de $k$-tores
\[
\sigma^*\colon T_L\to T_S \qquad\text{et}\qquad \tau^*\colon
T_S\to T_L,
\]
et l'on peut interpr\'eter en termes de tores les suites exactes et les
diagrammes commutatifs du \S\ref{sec:ens}. On trouve ainsi les
diagrammes commutatifs suivants d\'eduits de \eqref{diag:1}
\begin{equation}
  \begin{split}
    \label{diag:4}
    \xymatrix@C=12mm@R=5mm{\Gm\ar[r]^{i_L}\ar[d]_{2}&T_L\ar[d]^{\sigma^*}\\
    \Gm\ar[r]^{i_S}&T_S
    }
  \qquad\qquad
  \xymatrix@C=12mm@R=5mm{T_L\ar[r]^{N_L}\ar[d]_{\sigma^*}&\Gm\ar[d]^{i_C}\\
  T_S\ar[r]^{N_{S/C}}& T_C
  }
  \end{split}
\end{equation}
et ceux d\'eduits de \eqref{diag:3}
  \begin{equation}
  \begin{split}
    \label{diag:5}
    \xymatrix@C=12mm@R=5mm{T_S\ar[r]^{N_S}\ar[d]_{\tau^*}&\Gm\ar[d]^{2}\\
    T_L\ar[r]^{N_L}&\Gm
    }
  \qquad\qquad
  \xymatrix@C=12mm@R=5mm{T_C\ar[r]^{i_{S/C}}\ar[d]_{N_C}&
    T_S\ar[d]^{\tau^*}\\ 
  \Gm\ar[r]^{i_L}&T_L
  }
  \end{split}
\end{equation}

On obtient aussi \`a partir de \eqref{exseq:corol2} et
\eqref{exseq:prop2} les suites exactes
\[
1\to T_L/\Gm \xrightarrow{\sigma_I^*} T_S/\Gm
\xrightarrow{N_{S/C}} T_C/\Gm\to1
\]
et
\begin{equation}
  \label{exseq:2}
  1\to T^1_C \xrightarrow{i_{S/C}} T^1_S
  \xrightarrow{\tau_J^*} T^1_L\to1.
\end{equation}

\medbreak

Consid\'erons \`a pr\'esent le tore $U$ noyau de l'homomorphisme
\[
(N_L,2)\colon T_L\times\Gm\to\Gm.
\] 
Son $\Gamma$-module des caract\`eres
est le module $J'[X]$ introduit dans le corollaire~\ref{corol:2}. En
notant $T^1_{S/C}$ le noyau de $N_{S/C}\colon T_S\to
T_C$ (dont le module des caract\`eres est $J[{\wed^2}\!/\R]$) on obtient
\`a partir de \eqref{exseq:corol3} la suite exacte
\begin{equation}
  \label{exseq:S}
  1\to \Gm\to U\xrightarrow{{\sigma'}^*} T^1_{S/C}\to1.
\end{equation}

\begin{thm}
  \label{thm:Sstabrat}
  Le $k$-tore $U$ est stablement rationnel.
\end{thm}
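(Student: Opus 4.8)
The plan is to reduce the statement, via the exact sequence~\eqref{exseq:S}, to the $k$-rationality of the torus $T^1_{S/C}$, and then to prove the latter by recognising $T^1_{S/C}$ as a Weil restriction of the norm-one torus of a quadratic \'etale extension. Since the argument in fact produces the $k$-rationality of $U$, it yields a priori more than the stated stable rationality.

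First I would exploit~\eqref{exseq:S}, which presents $U$ as a torsor under the split torus $\Gm$ over $T^1_{S/C}$: one has $1\to\Gm\to U\xrightarrow{{\sigma'}^*}T^1_{S/C}\to1$. A torsor under $\Gm$ over an integral $k$-variety is classified by a Picard group, hence is trivial over the generic point and therefore trivial over a dense open subscheme; so $U$ is $k$-birational to $\Gm\times T^1_{S/C}$. It thus suffices to show that $T^1_{S/C}$ is $k$-rational, and then $U$ is $k$-rational, a fortiori stably $k$-rational.

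Next I would identify $T^1_{S/C}$. Since $E(S)=\wed^2(X)$ lies over $E(C)=\R(X)$ through the two-to-one map $\pi$, the algebra $S$ is quadratic \'etale over the cubic \'etale algebra $C$; transitivity of Weil restriction gives $T_S=R_{S/k}(\Gm)=R_{C/k}\bigl(R_{S/C}(\Gm)\bigr)$ and $T_C=R_{C/k}(\Gm)$, under which $N_{S/C}$ becomes $R_{C/k}$ applied to the algebra norm $R_{S/C}(\Gm)\to\Gm$ over $C$. As Weil restriction along a finite \'etale algebra is exact, taking kernels yields $T^1_{S/C}\cong R_{C/k}\bigl(R^1_{S/C}(\Gm)\bigr)$, the Weil restriction to $k$ of the norm-one torus of $S/C$. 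Writing $C=\prod_iC_i$ and $S=\prod_iS_i$ as products of fields, $R^1_{S/C}(\Gm)=\prod_iR^1_{S_i/C_i}(\Gm)$, and each factor is either $\Gm$ over $C_i$ or the norm-one torus of a separable quadratic field extension $S_i/C_i$; the latter is a smooth affine conic through the point~$1$, hence $C_i$-rational. Therefore $R^1_{S/C}(\Gm)$ is $C$-rational.

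Finally I would invoke that Weil restriction along the finite \'etale $k$-algebra $C$ carries rational varieties to rational varieties: a $C$-rational variety $V$ contains a dense open subscheme isomorphic to a dense open subscheme of some $\mathbb{A}^n_C$, and since $R_{C/k}$ preserves open immersions and dominant morphisms while $R_{C/k}(\mathbb{A}^n_C)=\mathbb{A}^{n\dim_kC}_k$, the variety $R_{C/k}(V)$ contains a dense open subscheme that is a dense open of an affine space over~$k$, hence $R_{C/k}(V)$ is $k$-rational. Applying this with $V=R^1_{S/C}(\Gm)$ shows that $T^1_{S/C}$ is $k$-rational, and the first step concludes. The main obstacle is organising this last step correctly, i.e.\ passing from ``$V$ is $C$-rational'' to ``$R_{C/k}(V)$ is $k$-rational'' via a dense-open comparison with affine space, which uses the good behaviour of Weil restriction with respect to open immersions and with products of affine spaces; the identification $T^1_{S/C}\cong R_{C/k}(R^1_{S/C}(\Gm))$ and the birational triviality of the $\Gm$-torsor $U\to T^1_{S/C}$ are routine.
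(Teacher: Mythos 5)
Your proof is correct, and its first step is the same as the paper's: the sequence~\eqref{exseq:S} exhibits $U$ as a $\Gm$-torsor over $T^1_{S/C}$ which is trivial at the generic point (triviality of $H^1(k(T^1_{S/C}),\Gm)$), so $U$ is birational to $\Gm\times T^1_{S/C}$. Where you genuinely diverge is in the treatment of $T^1_{S/C}$. The paper does not identify this torus structurally: it uses the isomorphism $T_S/T_C\simeq T^1_{S/C}$ coming from~\eqref{eq:I=J} to get an exact sequence $1\to T_C\to T_S\to T^1_{S/C}\to 1$, and then repeats the generic-fibre torsor argument (now with $H^1(k(T^1_{S/C}),T_C)=1$ by Shapiro and Hilbert~90) to conclude that $T_S$ is birational to $T_C\times T^1_{S/C}$; since $T_S$ and $T_C$ are $k$-rational this gives only the \emph{stable} rationality of $T^1_{S/C}$, which suffices. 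You instead identify $T^1_{S/C}$ with $R_{C/k}\bigl(R^1_{S/C}(\Gm)\bigr)$ via transitivity and left-exactness of Weil restriction, note that the norm-one torus of a quadratic \'etale extension is rational (a one-dimensional torus, in any characteristic), and invoke the fact that Weil restriction along a finite \'etale algebra preserves rationality. This is a stronger route: it yields $k$-rationality of $T^1_{S/C}$, hence of $U$, not merely stable rationality. The two points that deserve care in your version are both fine here: the identification of $N_{S/C}$ with $R_{C/k}$ of the relative norm is exactly the statement, on character modules, that $\nu_{{\wed^2}\!/\R}$ is induced from the two-element fibres of $\pi$; and the density of $R_{C/k}(U_0)$ in $R_{C/k}(V)$ for $U_0\subseteq V$ dense open holds because the varieties involved are smooth and geometrically integral over each factor of $C$, as one sees after base change to $k_s$ where the Weil restriction becomes a product. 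The paper's argument buys brevity and avoids any discussion of Weil restriction of open immersions; yours buys the stronger conclusion.
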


\begin{proof}
  Montrons d'abord que le $k$-tore $T^1_{S/C}$ est stablement
  rationnel. 
  L'isomorphisme $I[{\wed^2}\!/\R]\simeq J[{\wed^2}\!/\R]$
  de~\eqref{eq:I=J} entraine un isomorphisme de tores
  $T_S/T_C\simeq T^1_{S/C}$ induit par $\Id-\gamma$. On a
  donc une suite exacte
  \[
  1\to T_C\to T_S \xrightarrow{\Id-\gamma} T^1_{S/C}\to 1.
  \]
  La fibre de $\Id-\gamma$ au point g\'en\'erique de $T^1_{S/C}$ est
  un torseur sous $T_C$. Or le lemme de Shapiro et le th\'eor\`eme~90 de
  Hilbert donnent $H^1(k(T^1_{S/C}),T_C)=1$, donc $T_S$ est
  birationnellement \'equivalent \`a $T^1_{S/C}\times T_C$. Comme
  $T_S$ et $T_C$ sont $k$-rationnels, on voit que
  $T^1_{S/C}$ est stablement rationnel.

  Consid\'erons ensuite la suite exacte~\eqref{exseq:S}. Comme
  $H^1(k(T^1_{S/C}),\Gm)=1$, le m\^eme argument que dans la
  premi\`ere partie de la preuve
  montre que $U$ est birationnellement \'equivalent \`a
  $T^1_{S/C}\times\Gm$, donc $U$ est stablement rationnel.
\end{proof}

\section{Normes et groupe de Brauer relatif}
\label{sec:normes}

Conservons les notations du \S\ref{sec:tores}, et consid\'erons \`a
pr\'esent la cohomologie (\'etale) des tores qui y sont d\'efinis. Pour
toute $k$-alg\`ebre \'etale $A$, on note $N_{A/k}\colon A^\times\to
k^\times$ la norme, et
\[
A^1=\ker(N_{A/k})\subset A^\times,\qquad
N(A/k)=N_{A/k}(A^\times)\subset k^\times.
\]
Le th\'eor\`eme~90 de Hilbert et le lemme de Shapiro entrainent
$H^1(k,T_A)=1$; d\`es lors de la suite exacte qui d\'efinit $T_A^1$
\[
1\to T^1_A\to T_A\xrightarrow{N_A}\Gm\to1,
\]
on d\'eduit que $H^1(k,T_A^1)=A^\times/N(A/k)$.

\begin{prop}
  \label{prop:carrenorme}
  Soit $S/C$ l'extension quadratique r\'esolvante de la
  $k$-alg\`ebre \'etale quartique $L$. On a
  \[
  \{x\in k^\times\mid x^2\in N(L/k)\} = k^{\times2}\cdot N(S/k)
\quad\text{et}\quad
  k^\times\cap N(S/C)=k^{\times2}\cdot N(L/k).
  \]
\end{prop}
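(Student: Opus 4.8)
The plan is to derive both equalities from the two exact sequences of tori obtained in \S\ref{sec:tores}, by taking Galois cohomology and using the identification $H^1(k,T_A^1)=k^\times/N(A/k)$ (for $A=L$) together with its relative analogue $H^1(C,T_{S/C}^1)\ (=H^1(k,R_{C/k}T_{S/C}^1))=C^\times/N(S/C)$, and finally invoking Theorem~\ref{thm:Sstabrat} to kill an obstruction term. For the first equality, I would start from the commutative square on the left of \eqref{diag:5}, which says that $N_L\circ\tau^*$ equals the squaring map on $\Gm$ composed with $N_S$. Pushing $S^\times$ through $\tau_{S/k}$ and then $N_{L/k}$ therefore lands in $N(L/k)$ and simultaneously, via the square, in $k^{\times2}$ up to $N(S/k)$; conversely, an element $x$ with $x^2\in N(L/k)$ should be lifted through the sequence $1\to \Gm\to U\to T^1_{S/C}\to 1$ of \eqref{exseq:S} (or more directly through the tore $U=\ker((N_L,2))$), using that $H^1(k,U)=1$ because $U$ is stably rational. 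Concretely: the class of $x$ in $k^\times/N(L/k)=H^1(k,T_L^1)$ maps, under the map induced by $(N_L,2)$ having kernel $U$, to an element whose vanishing after squaring is exactly the hypothesis $x^2\in N(L/k)$; stable rationality of $U$ forces the relevant $H^1$ to vanish and produces the required preimage in $S^\times$ modulo $k^{\times2}$.

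For the second equality I would work over $C$ rather than over $k$. The sequence \eqref{exseq:2}, $1\to T^1_C\to T^1_S\xrightarrow{\tau_J^*}T^1_L\to 1$, restricted along $k\hookrightarrow C$ (equivalently, tensored up by $R_{C/k}$ where appropriate) and the isomorphism $T_S/T_C\simeq T^1_{S/C}$ from \eqref{eq:I=J} together give a handle on $C^\times\cap N(S/C)$ versus $N(S/C)\cap k^\times$. The inclusion $k^{\times2}\cdot N(L/k)\subseteq k^\times\cap N(S/C)$ is the easy direction: $k^{\times2}\subseteq N(S/C)$ because any square $a^2$ is $N_{S/C}$ of $a\in C^\times\subseteq S^\times$ (the extension $S/C$ has degree $2$), and $N(L/k)\subseteq N(S/C)$ follows from the right-hand square of \eqref{diag:4}, which gives $N_{S/C}\circ\sigma^*=i_C\circ N_L$, so that $N_{L/k}(\ell)$, viewed in $C^\times$, equals $N_{S/C}(\sigma^*(\ell))$. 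The reverse inclusion is the substantive one: an element $x\in k^\times$ that is a norm from $S/C$ gives a class in $H^1$ that one must show is trivial modulo $k^{\times2}\cdot N(L/k)$, and here again the vanishing of $H^1$ of a stably rational tore — this time deduced from the stable rationality of $T^1_{S/C}$ established inside the proof of Theorem~\ref{thm:Sstabrat} — should do the job, combined with a diagram chase linking $T^1_{S/C}$, $T^1_L$ and $\Gm$.

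The main obstacle I expect is bookkeeping the base-change between $k$ and $C$ correctly: the term $C^\times/N(S/C)$ is naturally $H^1(C,T^1_{S/C,C})$, and to extract information about $k^\times\cap N(S/C)$ I need to compare this with $H^1(k,\cdot)$ for the Weil restriction $R_{C/k}$ of the relevant tores, tracking how $k^\times\hookrightarrow C^\times$ interacts with the norm $N_{C/k}$ and with $i_{S/C}$. The other delicate point is ensuring that the connecting maps in cohomology line up with the algebra-level norm maps in the way the two squares of \eqref{diag:4} and \eqref{diag:5} predict, i.e. that "the class of $x$ dies in $H^1$" really translates into "$x$ lies in the asserted norm group" and not merely into a coset statement with an extra ambiguity; the stable-rationality input from Theorem~\ref{thm:Sstabrat} is precisely what removes that ambiguity, so the proof should reduce, after the diagram chases, to two clean applications of $H^1(k,U)=1$ and $H^1$ of $T^1_{S/C}$ vanishing.
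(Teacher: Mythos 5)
Your overall framework (take cohomology of the exact sequences of tori, identify $H^1(k,T^1_A)=k^\times/N(A/k)$ and $H^1(k,T^1_{S/C})=C^\times/N(S/C)$, and settle the easy inclusions via the squares of \eqref{diag:4} and \eqref{diag:5}) is the right one, but the step you rely on to close both arguments is false: stable rationality of a torus does \emph{not} imply the vanishing of its first Galois cohomology. The torus $R^1_{K/k}\Gm$ of a quadratic extension is rational, yet its $H^1$ is $k^\times/N(K/k)$, which is generally nontrivial; likewise the paper computes, in \eqref{eq:H1}, $H^1(k,U)=k^\times/N(L/k)k^{\times2}$ and $H^1(k,T^1_{S/C})=C^\times/N(S/C)$ — precisely the groups the proposition is about, and precisely \emph{not} trivial in general. (If $H^1(k,U)$ were trivial, the second equality would collapse to $k^\times=k^{\times2}\cdot N(L/k)$.) Theorem~\ref{thm:Sstabrat} is used in the paper only for the Hasse principle of Remark~\ref{rem:Hasse}; it plays no role in the proof of this proposition. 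So both of your ``substantive'' directions are unproved as written.

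What actually closes the arguments is more elementary. For the first equality, the key input is the exact sequence \eqref{exseq:2}, $1\to T^1_C\to T^1_S\to T^1_L\to1$ (which your sketch mentions only in passing for the second equality): mapping the three sequences $1\to T^1_A\to T_A\to\Gm\to1$ for $A=C,S,L$ into one another via $i_{S/C}$ and $\tau^*$, with squaring on the $\Gm$ column, and using $H^1(k,T_A)=1$, one gets an exact column $k^\times/N(C/k)\to k^\times/N(S/k)\to k^\times/N(L/k)$ in which the image of the first map is $k^{\times2}N(S/k)/N(S/k)$ and the condition $x^2\in N(L/k)$ is exactly the vanishing of $[x]$ under the second map. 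For the second equality, the map $\sigma^1\colon k^\times/N(L/k)k^{\times2}\to C^\times/N(S/C)$ is identified (via the diagram deduced from \eqref{diag:corol3}) as being induced by the inclusion $k\subset C$, and its injectivity comes from taking $H^1$ of $1\to\Gm\to U\to T^1_{S/C}\to1$ and invoking Hilbert~90 for $\Gm$ — not any rationality property of $U$. Your easy inclusions and the base-change bookkeeping between $k$ and $C$ are fine, but without replacing the false vanishing claims by these two exactness/injectivity arguments the proof does not go through.
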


\begin{proof}
  On a un diagramme commutatif dont les lignes sont exactes
  \[
  \xymatrix@R=3mm{
  1\ar[r]&T^1_C\ar[r]\ar[d]& T_C\ar[r]^{N_C}\ar[d]_{i_{S/C}}
  &\Gm\ar[r]\ar[d]^{2} &1\\
  1\ar[r]&T^1_S\ar[r]\ar[d]&
  T_S\ar[r]^{N_S}\ar[d]_{\tau^*} 
  &\Gm\ar[r]\ar[d]^{2} &1\\
  1\ar[r]&T^1_L\ar[r]& T_L\ar[r]^{N_L}
  &\Gm\ar[r] &1
  }
  \]
  La premi\`ere colonne de ce diagramme est exacte par la
  proposition~\ref{prop:2} (voir \eqref{exseq:2}). On en d\'eduit en
  cohomologie un diagramme commutatif dont la deuxi\`eme colonne est
  exacte
  \[
  \xymatrix@R=3mm{k^\times\ar[r]\ar[d]_{2}&k^\times/N(C/k)\ar[d]\\
  k^\times\ar[r]\ar[d]_{2}&k^\times/N(S/k)\ar[d]\\
  k^\times\ar[r]&k^\times/N(L/k)
  }
  \]
  La premi\`ere \'egalit\'e en d\'ecoule. Pour \'etablir la deuxi\`eme, on
  consid\`ere le diagramme commutatif d\'eduit de \eqref{diag:corol3}; ses
  lignes sont exactes:
  \begin{equation}
  \label{diag:carrenorme}
  \begin{split}
  \xymatrix@C=13mm@R=5mm{1\ar[r]&U\ar[r]\ar[d]_{{\sigma'}^*}&
    T_L\times\Gm\ar[r]^{(N_L,2)}\ar[d]^{(\sigma,i_S)}&
      \Gm\ar[r]\ar[d]^{i_C}&1\\ 
  1\ar[r]&T^1_{S/C}\ar[r]&T_S\ar[r]^{N_{S/C}}&
  T_C\ar[r]&1
  }
  \end{split}
  \end{equation}
  On en d\'eduit la commutativit\'e du carr\'e suivant, o\`u $\sigma^1$ est
  l'application induite par ${\sigma'}^*$:
  \[
  \xymatrix@R=5mm{
  k^\times\ar[r]\ar[d]_{i_C}&H^1(k,U)\ar[d]^{\sigma^1}\\
  C^\times\ar[r]&H^1(k,T^1_{S/C})
  }
  \]
  Par ailleurs, l'exactitude des lignes du
  diagramme~\eqref{diag:carrenorme} permet d'identifier
  \begin{equation}
  \label{eq:H1}
  H^1(k,U)=k^\times/N(L/k)k^{\times2} \qquad\text{et}\qquad
  H^1(k,T^1_{S/C})=C^\times/N(S/C),
  \end{equation}
  donc l'application $\sigma^1\colon k^\times/N(L/k)k^{\times2}\to
  C^\times/N(S/C)$ est induite par l'inclusion $k\subset C$. Or,
  le corollaire~\ref{corol:2} donne la suite exacte
  \[
  1\to\Gm\to U\xrightarrow{{\sigma'}^*} T^1_{S/C}\to 1,
  \]
  qui montre que $\sigma^1$ est injective. La deuxi\`eme \'egalit\'e s'en
  d\'eduit. 
\end{proof}

\begin{rem}
  \label{rem:Hasse}
  Comme le tore $U$ est stablement rationnel, le principe de Hasse
  vaut pour $H^1(k,U)$ lorsque $k$ est un corps global: voir
  \cite[\S8]{CTS}. Vu l'interpr\'etation de $H^1(k,U)$ en \eqref{eq:H1},
  on a donc un principe de Hasse modulo les carr\'es pour les normes
  d'extensions quartiques \'etales. On retrouve ainsi (en caract\'eristique
  arbitraire) le th\'eor\`eme~2.1 de \cite{S}.
\end{rem}

Consid\'erons \`a pr\'esent les groupes de cohomologie \'etale (ou
galoisienne) de degr\'e~2. Le 
groupe $H^2(k,\Gm)$ s'identifie au groupe de Brauer $\Br(k)$. Plus
g\'en\'eralement,
\[
H^2(k,T_A)=\Br(A)
\]
pour toute $k$-alg\`ebre \'etale $A$, et la norme $T_A\to\Gm$ induit la
corestriction
\[
\cor_{A/k}\colon\Br(A)\to\Br(k).
\]
Si $A\subset B$ est une extension d'alg\`ebres \'etales, on note
$\Br(B/A)$ le noyau de l'homomorphisme d'extension des scalaires
$\Br(A)\to\Br(B)$, et $_2\Br(B/A)$ le sous-groupe de $2$-torsion de
$\Br(B/A)$.

\begin{thm}
  \label{thm:Brauer}
  Soit $L$ une $k$-alg\`ebre \'etale quartique, et soit $S/C$
  l'extension r\'esol\-vante associ\'ee. On a un diagramme commutatif dont
  les lignes sont exactes:
  \begin{equation}
  \label{diag:Brauer}
  \begin{split}
  \xymatrix{
  1\ar[r]&C^\times\ar[d]_{N_{C/k}}\ar[r]&
  S^\times\ar[d]^{\tau^*\times
    -N_{S/k}}\ar[r]^{\Id-\gamma}&S^\times \ar@{=}[d]
  \ar[r]^{N_{S/C}}& 
  C^\times\ar[d]\ar[r]^-{\delta}& \Br(S/C)\ar[d]^{\cor_{C/k}}
  \ar[r] &0\\
  1\ar[r]&k^\times\ar[r]^-{i_L\times-2}& L^\times\times k^\times
  \ar[r]^-{\sigma^*,i_S}& S^\times\ar[r]^-{N_{S/C}}&
    C^\times/k^\times \ar[r]^-{\overline \delta}&{}_2\Br(L/k)\ar[r]&0
  }
  \end{split}
  \end{equation}
\end{thm}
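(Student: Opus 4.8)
The strategy is to derive the two rows of~\eqref{diag:Brauer} from cohomology exact sequences already established, and then check that the connecting maps are compatible. The top row should come from applying the long exact sequence in \'etale cohomology to the short exact sequence of tores underlying $1\to T_C\to T_S\xrightarrow{\Id-\gamma}T^1_{S/C}\to1$ (used in the proof of Theorem~\ref{thm:Sstabrat}), together with the identifications $H^1(k,T_C)=0$, $H^1(k,T_S)=0$ (Hilbert~90 plus Shapiro), $H^0(k,T_C)=C^\times$, $H^0(k,T_S)=S^\times$, $H^0(k,T^1_{S/C})=S^1$ or rather $C^\times/N(S/C)$ after the earlier identification $H^1(k,T^1_{S/C})=C^\times/N(S/C)$ from~\eqref{eq:H1}, and $H^2(k,T^1_{S/C})$ mapping into $H^2(k,T_S)=\Br(S)$ with kernel detected by $\Br(S/C)$. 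I would be careful that the term ``$C^\times$'' appearing fourth in the top row is really $N_{S/C}(S^\times)\subset C^\times$, i.e.\ the image of $N_{S/C}$, and that exactness at that spot plus surjectivity of $\delta$ onto $\Br(S/C)$ is exactly the statement that $H^1(k,T^1_{S/C})$ surjects onto $\ker(\Br(S)\to\Br(S\otimes_C S))$; here one uses that $T^1_{S/C}$ is $R^1_{S/C}(\Gm)$ so $H^1$ over $k$ equals $C^\times/N(S/C)$ and $\delta$ is the classical connecting map whose image is $\Br(S/C)$.

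For the bottom row, the input is the short exact sequence of tores~\eqref{exseq:S}, $1\to\Gm\to U\xrightarrow{{\sigma'}^*}T^1_{S/C}\to1$, wait---more precisely one wants the sequence $1\to U\to T_L\times\Gm\xrightarrow{(N_L,2)}\Gm\to1$ from diagram~\eqref{diag:carrenorme}, whose cohomology gives $H^1(k,U)=k^\times/N(L/k)k^{\times2}$ as in~\eqref{eq:H1}, and then Theorem~\ref{thm:Sstabrat} (or rather the injectivity of $\sigma^1$ noted in the proof of Proposition~\ref{prop:carrenorme}) to push this into $H^1(k,T^1_{S/C})=C^\times/N(S/C)$. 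Concretely I would take cohomology of~\eqref{exseq:S}: since $H^1(k,\Gm)=0$ one gets $1\to k^\times/?\to H^1(k,U)\to H^1(k,T^1_{S/C})\to H^2(k,\Gm)=\Br(k)$, and the point is that $H^2(k,U)$ injects suitably so that the image of $H^1(k,T^1_{S/C})\to\Br(k)$ is precisely ${}_2\Br(L/k)$ (because $U$ is stably rational, $H^1(k,U)$ has no ``extra'' classes, and $2$-torsion enters through the $\Gm\xrightarrow{2}\Gm$). Thus the bottom row is the long exact cohomology sequence of~\eqref{exseq:S} truncated and reindexed, with $k^\times/N(L/k)k^{\times2}$ replaced by the explicit cokernel $L^\times\times k^\times/(i_L\times-2)(k^\times)$ coming from the presentation of $U$ in~\eqref{diag:carrenorme}.

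The commutativity of the whole ladder then reduces to the commutativity of~\eqref{diag:carrenorme} and~\eqref{diag:corol3} at the level of modules, which is already recorded, together with functoriality of the long exact sequence in \'etale cohomology: the vertical map $\tau^*\times-N_{S/k}$ on $S^\times$, the identity on the middle $S^\times$, and $\cor_{C/k}$ on $\Br(S/C)$ are induced by the morphisms of short exact sequences of tores relating $\Id-\gamma$ on $T_S$ to $(\sigma^*,i_S)$ on $T_L\times\Gm$ (via $\tau^*$ and the norm maps in~\eqref{diag:5} and~\eqref{diag:4}). The square on the far right, identifying $\cor_{C/k}\circ\delta$ with $\overline\delta\circ N_{S/C}$ up to the passage $C^\times\to C^\times/k^\times$, is the one place where I expect real work: one must check that the connecting map $\delta\colon C^\times\to\Br(S/C)$ and its ``relative'' version $\overline\delta\colon C^\times/k^\times\to{}_2\Br(L/k)$ are intertwined by corestriction, which amounts to the identity $\cor_{C/k}(S/C,x)=\cor_{C/k}(S/C,x)$ landing in $2$-torsion and being split by $L$---this is essentially Corollary~\ref{cor:Brauer} and should be extracted from the bottom row itself once exactness is in hand. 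So the logical order is: (1) build the top row from $\Id-\gamma$; (2) build the bottom row from~\eqref{exseq:S} and the presentation of $U$; (3) use~\eqref{diag:carrenorme} and~\eqref{diag:corol3} to map the first to the second; (4) verify the four squares commute by functoriality, the rightmost requiring the explicit comparison of connecting homomorphisms under corestriction. Step~(4), and within it the rightmost square, is the main obstacle; everything else is bookkeeping with exact sequences that are already in place.
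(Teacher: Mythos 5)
Your treatment of the top row is essentially the paper's: split the four-term sequence $1\to T_C\to T_S\xrightarrow{\Id-\gamma}T_S\xrightarrow{N_{S/C}}T_C\to1$ through $T^1_{S/C}$, take cohomology, and use Hilbert~90/Shapiro together with the identification $H^1(k,T^1_{S/C})=\Br(S/C)$. The real gap is in the bottom row. The paper's proof hinges on a second intermediate torus, $T=\ker\bigl(N_{S/C}\colon T_S\to T_C/\Gm\bigr)$, through which the four-term sequence $1\to\Gm\xrightarrow{i_L\times-2}T_L\times\Gm\xrightarrow{(\sigma^*,i_S)}T_S\to T_C/\Gm\to1$ (obtained from the morphism of $\Gamma$-modules covering \eqref{exseq:prop1bis}) is split; the cohomology of $1\to\Gm\to T_L\times\Gm\to T\to1$ yields both the exactness of the bottom row at $L^\times\times k^\times$ and at $S^\times$ (via $T(k)=\ker(S^\times\to C^\times/k^\times)$ and $H^1(k,\Gm)=1$) and the identification $H^1(k,T)={}_2\Br(L/k)$. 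You never introduce $T$; you work instead with $U$ and the sequence~\eqref{exseq:S}. That route can at best recover the right-hand end of the bottom row (exactness at $C^\times/k^\times$ and surjectivity onto ${}_2\Br(L/k)$), and even there your justification is off: the image of the connecting map $H^1(k,T^1_{S/C})\to\Br(k)$ equals ${}_2\Br(L/k)$ not because $U$ is stably rational (irrelevant here), but because $H^2(k,U)$ injects into $\Br(L)\times\Br(k)$ and the composite $\Gm\to U\hookrightarrow T_L\times\Gm$ is $i_L\times(-2)$. Exactness at $L^\times\times k^\times$ and at $S^\times$ --- precisely the content exploited later in the remarque~\ref{rem:precis} --- is not bookkeeping already in place, and your plan does not address it.

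Second, your handling of the rightmost square is circular: you propose to extract the compatibility of $\delta$ and $\overline\delta$ under $\cor_{C/k}$ from the corollaire~\ref{cor:Brauer}, but that corollary is deduced from this theorem. In the paper no separate comparison of connecting homomorphisms is needed: the entire ladder is the image under cohomology of a morphism between the two four-term exact sequences of tori above, split compatibly through $T^1_{S/C}$ and $T$, with $N_C\colon T_C\to\Gm$ as the leftmost vertical map. Every square, including the rightmost one, then commutes by functoriality of the long exact sequence, and the right-hand vertical map is identified with $\cor_{C/k}$ through the commuting square involving $\Br(C)\to\Br(k)$.
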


\begin{proof}
  Soit $X$ le $\Gamma$-ensemble \`a quatre \'el\'ements $E(L)$ canoniquement
  associ\'e \`a $L$. Comme dans le \S\ref{sec:ens}, on consid\`ere
  $\wed^2=\wed^2(X)$ et $\R=\R(X)$. On a un diagramme commutatif de
  $\Gamma$-modules, dont les lignes sont exactes (la ligne inf\'erieure
  est la suite exacte \eqref{exseq:prop1bis}):
  \[
  \xymatrix{0&\Z[\R]\ar[l]&
    \Z[\wed^2]\ar[l]_{\varepsilon_{{\wed^2}\!/\R}}&
    \Z[\wed^2]\ar[l]_{\Id-\gamma}& \Z[\R]\ar[l]_{\nu_{{\wed^2}\!/\R}}&
    0\ar[l] \\
  0&\Z\ar[l]\ar[u]^{\nu_\R}& \Z[X]\times\Z\ar[l]_-{(\varepsilon_X,-2)}
  \ar[u]_{\tau,-\nu_{\wed^2}}&
  \Z[\wed^2]\ar[l]_-{\sigma\times\varepsilon_{\wed^2}} \ar@{=}[u]&
  I[\R]\ar[l]\ar[u]& 0\ar[l]
  }
  \]
  On en d\'eduit un diagramme commutatif de tores, dont les lignes sont
  exactes:
  \begin{equation}
  \label{diag:Brauer1}
  \begin{split}
  \xymatrix{1\ar[r]&T_C\ar[d]_{N_C}\ar[r]&
    T_S\ar[d]^{\tau^*\times -N_S}\ar[r]^{\Id-\gamma}&
    T_S\ar@{=}[d]\ar[r]^{N_{S/C}}& T_C\ar[r]\ar[d]&1\\
  1\ar[r]&\Gm\ar[r]^-{i_L\times-2}&T_L\times\Gm
  \ar[r]^-{(\sigma^*,i_S)}& T_S\ar[r]&
  T_C/\Gm\ar[r]&1
  }
  \end{split}
  \end{equation}
  Soient
  \[
  T^1_{S/C}=\ker(N_{S/C}\colon T_S\to T_C)
  \quad\text{et}\quad T=\ker(N_{S/C}\colon T_S\to
  T_C/\Gm).
  \]
  On a un morphisme canonique (d'inclusion) $T^1_{S/C}\to
  T$, et le diagramme~\eqref{diag:Brauer1} se partage en deux:
  \begin{equation}
  \label{diag:Brauer2}
  \parbox{3cm}{
  \xymatrix@C=3mm@R=3mm{
  1\ar[r]&T_C\ar[r]\ar[d]&T_S\ar[d]\ar[r]&
  T^1_{S/C}\ar[d]\ar[r]&1\\
  1\ar[r]&\Gm\ar[r]&T_L\times\Gm\ar[r]&T\ar[r]&1
  }
  }
  \quad
  \text{et}
  \quad
  \parbox{3cm}{
  \xymatrix@C=3mm@R=3mm{
  1\ar[r]&T^1_{S/C}\ar[r]\ar[d]& T_S\ar@{=}[d]\ar[r]&
  T_C\ar[d]\ar[r]&1\\
  1\ar[r]&T\ar[r]&T_S\ar[r]& T_C/\Gm\ar[r]&1
  }
  }
  \end{equation}
  Le diagramme de gauche donne en cohomologie les diagrammes
  commutatifs suivants, dont les lignes sont exactes:
  \begin{equation}
  \label{diag:Brauer3}
  \begin{split}
  \xymatrix@R=5mm@C=5mm{
  1\ar[r]&C^\times\ar[r]\ar[d]_{N_{C/k}}&S^\times\ar[r]^-{\Id-\gamma}
  \ar[d]^{\tau^*\times -N_{S/k}}& T^1_{S/C}(k)\ar[d]\ar[r]&1\\
  1\ar[r]&k^\times\ar[r]&L^\times\times k^\times\ar[r]&T(k)\ar[r]&1}
  \end{split}
  \end{equation}
  et
  \[
  \xymatrix@C=10mm@R=5mm{1\ar[r]&H^1(k,T^1_{S/C})\ar[d]\ar[r]&
    \Br(C)\ar[d]_{\cor_{C/k}}\ar[r]& \Br(S)\ar[d]\\
  1\ar[r]& H^1(k,T)\ar[r]&\Br(k)\ar[r]^-{i_L\times-2}&
  \Br(L)\times\Br(k)
  }
  \]
  De l'exactitude des suites de ce dernier diagramme, on d\'eduit des
  isomorphismes canoniques
  \begin{equation}
    \label{eq:Brauer}
    H^1(k,T^1_{S/C})=\Br(S/C) \qquad\text{et}\qquad
    H^1(k,T)={}_2\Br(L/k).
  \end{equation}
  Par ailleurs, le diagramme de droite de~\eqref{diag:Brauer2} donne
  en cohomologie le diagramme commutatif suivant, dont les lignes sont
  exactes:
  \begin{equation}
    \label{diag:Brauer4}
    \begin{split}
  \xymatrix@R=5mm{1\ar[r]& T^1_{S/C}(k)\ar[d]\ar[r]&
    S^\times\ar@{=}[d] \ar[r]& C^\times\ar[d]\ar[r]&
    H^1(k,T^1_{S/C})\ar[d]\ar[r]& 1\\
  1\ar[r]& T(k)\ar[r]& S^\times\ar[r]& C^\times/k^\times\ar[r]&
  H^1(k,T)\ar[r]&1} 
    \end{split}
  \end{equation}
  Le th\'eor\`eme se d\'eduit en recollant les
  diagrammes~\eqref{diag:Brauer3} et \eqref{diag:Brauer4}, et en
  utilisant~\eqref{eq:Brauer}.
\end{proof}

\begin{rem}
  \label{rem:precis}
  L'exactitude de la ligne inf\'erieure de~\eqref{diag:Brauer} en
  $S^\times$ donne une autre preuve de l'\'egalit\'e $k^\times\cap
  N(S/C)=k^{\times2}\cdot N(L/k)$ de la
  proposition~\ref{prop:carrenorme}, avec une pr\'ecision
  suppl\'ementaire: si $y\in S^\times$ est tel que
  $N_{S/C}(y)\in k^\times$, alors il existe $x\in
  L^\times$ et $\lambda\in k^\times$ tels que $y=\lambda\sigma^*(x)$ (ce
  qui entraine $N_{S/C}(y)=\lambda^2N_{L/k}(x)$ vu~\eqref{diag:4}).
\end{rem}

Pour d\'ecrire les homomorphismes $\delta$ et $\overline\delta$ qui
apparaissent dans le diagramme~\eqref{diag:Brauer}, on introduit la
notation suivante: pour une extension quadratique \'etale $A\subset B$
de $k$-alg\`ebres \'etales, dont l'automorphisme non trivial est not\'e
$\rho$, et pour $a\in A^\times$, on consid\`ere la $A$-alg\`ebre de
quaternions
\[
(B/A,a)=B\oplus Bz
\]
o\`u la multiplication est d\'efinie par
\[
z^2=a\qquad\text{et}\qquad zb=\rho(b)z\quad\text{pour $b\in B$.}
\]
L'alg\`ebre $(B/A,a)$ est une alg\`ebre d'Azumaya sur $A$; on d\'esigne
encore par $(B/A,a)$ sa classe de Brauer dans $\Br(A)$. Le calcul
montre que l'application $\delta$ est donn\'ee par
\[
\delta(x)=(S/C,x)\qquad\text{pour $x\in C^\times$}.
\]
D\`es lors, vu la commutativit\'e du diagramme~\eqref{diag:Brauer}, on a
\[
\overline\delta(xk^\times)=(S/C,x)\qquad\text{pour $x\in
  C^\times$}.
\]

\begin{corol}
  \label{cor:Brauer}
  Avec les notations du th\'eor\`eme~\ref{thm:Brauer}, 
  \[
  {}_2\Br(L/k)= \{\cor_{C/k}(S/C,x)\mid x\in C^\times\}.
  \]
  De plus, pour $x\in C^\times$ la classe de
  Brauer $\cor_{C/k}(S/C,x)$ est d\'eploy\'ee si et seulement si
  $x\in k^\times\cdot N(S/C)$.
\end{corol}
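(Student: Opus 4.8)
The plan is to read off both assertions directly from the exactness of the lower row of the diagram~\eqref{diag:Brauer} of Theorem~\ref{thm:Brauer}, combined with the formula $\overline\delta(xk^\times)=(S/C,x)$ recorded just above the corollary; together with the commutativity of~\eqref{diag:Brauer} this formula says that $\overline\delta(xk^\times)=\cor_{C/k}(S/C,x)$ for $x\in C^\times$, so that the right-hand side of the asserted equality is exactly the image of $\overline\delta$.

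For the first equality I would simply invoke the exactness of the lower row of~\eqref{diag:Brauer} at ${}_2\Br(L/k)$, which says that $\overline\delta\colon C^\times/k^\times\to{}_2\Br(L/k)$ is surjective. Since its image is by the preceding remark the set $\{\cor_{C/k}(S/C,x)\mid x\in C^\times\}$, this set equals ${}_2\Br(L/k)$.

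For the second assertion, a class lying in the subgroup ${}_2\Br(L/k)\subset\Br(k)$ is split exactly when it vanishes in $\Br(k)$, equivalently when it vanishes in ${}_2\Br(L/k)$; hence $\cor_{C/k}(S/C,x)$ is split if and only if $xk^\times\in\ker\overline\delta$. By exactness of the lower row of~\eqref{diag:Brauer} at $C^\times/k^\times$, this kernel is the image of the arrow $S^\times\to C^\times/k^\times$, that is, the composite of the norm $N_{S/C}\colon S^\times\to C^\times$ with the projection $C^\times\to C^\times/k^\times$. Therefore $xk^\times\in\ker\overline\delta$ if and only if $x\in k^\times\cdot N_{S/C}(S^\times)=k^\times\cdot N(S/C)$, which is the stated criterion.

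I do not anticipate a genuine obstacle here: the corollary is essentially a repackaging of Theorem~\ref{thm:Brauer} together with the explicit computation $\delta(x)=(S/C,x)$. The one point that deserves a sentence rather than a mere citation is the translation of ``split'' into membership in $\ker\overline\delta$; the substantive inputs — the identification $H^1(k,T)={}_2\Br(L/k)$ and the determination of $\delta$ — are already available, so the argument is a short diagram chase.
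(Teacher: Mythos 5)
Your proof is correct and takes essentially the same route as the paper: the corollary is read off from the exactness of the lower row of the diagram~\eqref{diag:Brauer} at ${}_2\Br(L/k)$ and at $C^\times/k^\times$, combined with the identification $\overline\delta(xk^\times)=\cor_{C/k}(S/C,x)$. The only difference is that you make explicit the (immediate) translation of \flqq d\'eploy\'ee\frqq\ into membership in $\ker\overline\delta$, which the paper leaves implicit.
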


\begin{proof}
  Vu la d\'efinition de $\overline\delta$, le corollaire d\'ecoule
  imm\'ediatement de l'exactitude de la ligne inf\'erieure du
  diagramme~\eqref{diag:Brauer} en $_2\Br(L/k)$ et en $C^\times/k^\times$.
\end{proof}

\section{Formes d'Albert}
\label{sec:Albert}

\`A toute alg\`ebre simple centrale $A$ de degr\'e~$4$ et d'exposant~$2$
sur un corps $k$, Albert associe une forme quadratique $q$ de
dimension~$6$ et de discriminant\footnote{Pour \'eviter les distinctions
  de cas, nous appelons \emph{discriminant} d'une forme quadratique
  sur un corps de caract\'eristique~$2$ son invariant d'Arf.}
trivial sur $k$, d\'etermin\'ee de mani\`ere unique \`a similitude pr\`es, dont
l'indice de Witt est directement li\'e \`a l'indice de Schur de l'alg\`ebre:
$A$ est d\'eploy\'ee (resp.\ est un corps) si et seulement si $q$ est
hyperbolique (resp.\ est anisotrope), et par cons\'equent $A$ est
d'indice~$2$ si et seulement si $q$ est d'indice~$1$; voir
\cite[\S16.A]{BoI}. Si $L$ est une $k$-alg\`ebre \'etale de dimension~$4$,
les 
\'el\'ements de ${}_2\Br(L/k)$ sont tous repr\'esent\'es par des alg\`ebres
simples centrales de degr\'e~$4$ et d'exposant~$2$ sur $k$. On montre
dans cette section comment calculer une forme d'Albert associ\'ee \`a une
alg\`ebre dont la classe de Brauer est dans ${}_2\Br(L/k)$ \`a partir
d'une repr\'esentation de cette classe comme dans le
corollaire~\ref{cor:Brauer}. Si $k$ est un corps fini, la question est
sans int\'er\^et puisque $\Br(k)=0$. Dans toute cette section, on suppose
donc que $k$ est un corps infini.

Soit $S/C$ l'extension r\'esolvante associ\'ee \`a la $k$-alg\`ebre
$L$. L'alg\`ebre de Lie du tore $T_L$ s'identifie \`a $L$ (avec un crochet
de Lie nul), et la diff\'erentielle du morphisme $\sigma^*\colon T_L\to
T_S$ est une application lin\'eaire $d\sigma^*\colon L\to S$
d\'ecrite dans \cite[(5.55)]{KT} (o\`u l'image de tout $x\in L$ est not\'ee
$\lambda_x$). Rappelons de \cite[Prop.~5.16]{KT} qu'il existe un ouvert de
Zariski non vide de $L$ dont les \'el\'ements $\ell$ satisfont les
propri\'et\'es suivantes:
\begin{enumerate}
\item[(i)]
la $k$-alg\`ebre engendr\'ee par $\ell$ est $L$;
\item[(ii)]
la $C$-alg\`ebre engendr\'ee par $d\sigma^*(\ell)$ est $S$;
\item[(iii)]
la $k$-alg\`ebre engendr\'ee par
$N_{S/C}\bigl(d\sigma^*(\ell)\bigr)$ est $C$.
\end{enumerate}
Fixons un tel \'el\'ement $\ell\in L$ et notons
$c=N_{S/C}\bigl(d\sigma^*(\ell)\bigr)\in C$. Les \'el\'ements $1$,
$c$, $c^2$ forment donc une base de $C$ sur $k$. Choisissons une forme
lin\'eaire $s\colon C\to k$ non nulle telle que $s(1)=s(c)=0$. 
Si $\ker s$ contient un id\'eal non nul de $C$, alors il contient un
idempotent $e\neq0$, $1$. Cet idempotent s'\'ecrit $e=\alpha+\beta c$
pour certains $\alpha\in k$, $\beta\in k^\times$, et l'\'egalit\'e $e^2=e$
entraine que $c$ est racine d'un polyn\^ome de degr\'e~$2$ \`a coefficients
dans $k$. C'est impossible puisque $c$ engendre $C$, donc $\ker s$ ne
contient pas d'id\'eal non nul. On peut alors associer \`a toute forme
quadratique non singuli\`ere $q\colon V\to C$ sur un $C$-module libre
$V$ une forme quadratique non singuli\`ere $s_*(q)\colon V\to k$ d\'efinie
par
\[
s_*(q)(v)=s\bigl(q(v)\bigr) \qquad\text{pour $v\in V$.}
\]
La forme $s_*(q)$ est appel\'ee \emph{transfert} de $q$ (induit par
$s$); voir \cite[\S20.A]{EKM} pour les propri\'et\'es du transfert des
formes quadratiques. 
Si $C$ n'est pas un corps, une forme
quadratique sur $C$ est une collection de formes quadratiques sur
les composantes simples de $C$; cela ne change rien d'essentiel aux
propri\'et\'es du transfert.

Pour
$x\in C^\times$ on consid\`ere en particulier la forme quadratique
$\qf{x}N_{S/C}\colon S\to C$ qui envoie $y\in S$
sur $x\cdot N_{S/C}(y)\in C$.

\begin{thm}
  \label{thm:Albert}
  Pour tout $x\in C^\times$, la forme quadratique
  $s_*(\qf{x}N_{S/C})$ obtenue par transfert de la forme
  quadratique $\qf{x}N_{S/C}$ au moyen de $s$ est une forme
  d'Albert de l'alg\`ebre de degr\'e~$4$ qui repr\'esente
  $\cor_{C/k}(S/C,x)$. 
\end{thm}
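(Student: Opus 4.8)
The plan is to recognise $q:=s_*(\qf{x}N_{S/C})$ as an Albert form by checking the three properties that characterise such forms. By \cite[\S16.A]{BoI}, a nonsingular $6$-dimensional quadratic form over $k$ with trivial discriminant is an Albert form of the degree-$4$ exponent-$2$ algebra (or the split algebra) whose Brauer class is its Clifford invariant; for degree $4$ the Brauer class determines the algebra up to isomorphism, and the Witt index of the Albert form reads off the Schur index. So it suffices to prove (i)~$\dim q=6$; (ii)~$\disc q$ is trivial; (iii)~the Clifford invariant $c(q)\in{}_2\Br(k)$ equals $\cor_{C/k}(S/C,x)$. Point~(i) is immediate, since $S$ is free of rank~$2$ over $C$ and $[C:k]=3$, the transfer being nonsingular because $\ker s$ contains no ideal. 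For~(ii): scaling a rank-$2$ form by $\qf{x}$ changes neither the signed discriminant nor, in characteristic~$2$, the Arf invariant, so the invariant of $\qf{x}N_{S/C}$ over $C$ is that of the norm form $N_{S/C}$, namely the class of $S/C$; by the transfer formulas of \cite[\S20]{EKM} for an even-rank form, $\disc q$ is the corestriction to $k$ of that class, which by the tower formula equals $\disc(S/k)$. And $\disc(S/k)$ is trivial, because the $\wed^2$-representation carries $\mathfrak S_4=W(\mathsf D_3)$ into $\mathfrak A_6$ — a transposition of $X$ acts on $\wed^2$ as a product of two transpositions, a $4$-cycle as a $4$-cycle times a transposition.

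The heart of the matter is~(iii), which I would settle through the coincidence $\mathsf A_3\equiv\mathsf D_3$ of the introduction. Write $q_0:=s_*(N_{S/C})$, a nonsingular $6$-dimensional $k$-form of trivial discriminant. One first shows that $q_0$ is hyperbolic — this is the case $x=1$, since $(S/C,1)\cong M_2(C)$ is split and $\cor_{C/k}(S/C,1)=0$ — a point where the specific $s$ is essential: because $c=N_{S/C}\bigl(d\sigma^*(\ell)\bigr)$ lies in the norm group $N(S/C)$ one has $\qf{c}N_{S/C}\cong N_{S/C}$ over $C$, and combined with $s(1)=s(c)=0$ this forces $q_0$ to be hyperbolic; hence $\mathbf{PGO}^+(q_0)\cong\mathbf{PGL}_4$. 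Now the torus $T:=\ker\bigl(N_{S/C}\colon T_S\to T_C/\Gm\bigr)$ — whose cohomology was identified with ${}_2\Br(L/k)$ in the proof of Theorem~\ref{thm:Brauer} — acts by multiplication on $(S,q_0)$ through proper similitudes, an element $y$ with $N_{S/C}(y)\in\Gm$ multiplying $q_0$ by the scalar $N_{S/C}(y)$; this realises $T$ as a maximal torus of $\mathbf{GO}^+(q_0)$. Composing with $\mathbf{GO}^+(q_0)\to\mathbf{PGO}^+(q_0)\cong\mathbf{PGL}_4$ gives $H^1(k,T)\to H^1(k,\mathbf{PGL}_4)\to\Br(k)$, and one checks that this composite equals the isomorphism $H^1(k,T)={}_2\Br(L/k)$ of Theorem~\ref{thm:Brauer}, so that the class $\xi_x$ matching $x$ (and $\cor_{C/k}(S/C,x)=\overline\delta(xk^\times)$) goes to $\cor_{C/k}(S/C,x)$. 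By the $\mathsf D_3\equiv\mathsf A_3$ equivalence, the $\mathbf{PGO}^+_6$-torsor of class $\xi_x$ carries both a degree-$4$ algebra of Brauer class $\cor_{C/k}(S/C,x)$ and a $6$-dimensional trivial-discriminant form having that algebra as Clifford invariant, and this form is the twist of $q_0$ by $\xi_x$. Twisting $q_0$ by $\xi_x$ — equivalently by the image of $x\in C^\times$ under $S^\times\xrightarrow{N_{S/C}}C^\times\to H^1(k,T^1_{S/C})\to H^1(k,T)$, as in Remark~\ref{rem:precis} — replaces $N_{S/C}$ by $\qf{x^{-1}}N_{S/C}$, hence $q_0$ by $s_*(\qf{x^{-1}}N_{S/C})\cong s_*(\qf{x}N_{S/C})=q$ (the isometry because $x^{-1}\equiv x$ modulo $C^{\times2}$). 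Therefore $q$ is an Albert form of the algebra representing $\cor_{C/k}(S/C,x)$.

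The difficulties are all in the second paragraph: the hyperbolicity of $q_0$ and the correct set-up of $T\to\mathbf{GO}^+(q_0)$, which is where the precise conditions on $s$ (nonzero, $s(1)=s(c)=0$, $c$ a generator of $C$; explicitly $s$ is a scalar multiple of $\Tr_{C/k}(\,\cdot\,/p'(c))$ for $p$ the minimal polynomial of $c$) are genuinely needed — distinct functionals give non-similar transfers, so only this $s$ produces the Albert form of $\cor_{C/k}(S/C,x)$ rather than of a twist — and the identification of $H^1(k,T)\to H^1(k,\mathbf{PGL}_4)\to\Br(k)$ with the isomorphism onto ${}_2\Br(L/k)$, which amounts to matching the pullback along $T\to\mathbf{PGL}_4$ of $1\to\Gm\to\mathbf{GL}_4\to\mathbf{PGL}_4\to1$ with the extension $1\to\Gm\to T_L\times\Gm\to T\to1$ of \eqref{diag:Brauer2}, while keeping track of the factor-of-two discrepancy between the scalar actions on the $4$- and $6$-dimensional representations of $\mathsf D_3\equiv\mathsf A_3$. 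A more computational alternative bypasses the group schemes: compute $c(q)$ directly from the transfer formula for the Clifford (Hasse--Witt) invariant \cite[\S20]{EKM} applied to the rank-$2$ form $\qf{x}N_{S/C}$, whose full Clifford algebra over $C$ is the quaternion algebra $(S/C,x)$, the correction terms in the formula being killed by the same properties of $s$ and the triviality of $\disc(S/k)$. Throughout, the characteristic-$2$ case is handled by systematically replacing signed discriminants by Arf invariants and norms by traces, using the transfer formulas available there.
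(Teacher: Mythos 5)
Your reduction of the theorem to (i) dimension, (ii) trivial discriminant, (iii) Clifford invariant equal to $\cor_{C/k}(S/C,x)$, and your identification of the hyperbolicity of $q_0=s_*(N_{S/C})$ as the key lemma, match the paper's strategy. But the hyperbolicity argument itself has a genuine gap. You deduce it from ``$c\in N(S/C)$, hence $\qf{c}N_{S/C}\cong N_{S/C}$, combined with $s(1)=s(c)=0$.'' Those hypotheses only furnish two isotropic vectors of $q_0$, namely $1$ and $d\sigma^*(\ell)$, hence Witt index $\geq 1$; combined with the trivial discriminant from (ii) this still leaves a $4$-dimensional residue of trivial discriminant, which need not be hyperbolic. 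What is required is that the whole plane $k+k\cdot d\sigma^*(\ell)$ be \emph{totally} isotropic, i.e.\ that the polar form satisfy $b_{q_0}\bigl(1,d\sigma^*(\ell)\bigr)=s\bigl(\Tr_{S/C}(d\sigma^*(\ell))\bigr)=0$. This does not follow from $s(1)=s(c)=0$ for an arbitrary norm $c$ generating $C$; it uses the specific identity $\Tr_{S/C}\bigl(d\sigma^*(\ell)\bigr)=\Tr_L(\ell)\in k\subseteq\ker s$, obtained by differentiating the right-hand square of~\eqref{diag:4} (equivalently \cite[Prop.~5.16]{KT}), which forces $N_{S/C}\bigl(\alpha+\beta d\sigma^*(\ell)\bigr)=\alpha^2+\alpha\beta\Tr_L(\ell)+\beta^2c\in k+kc=\ker s$. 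Once this is supplied, Witt index $\geq 2$ together with the trivial discriminant does give hyperbolicity.

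Concerning (iii), your main route through $T\to\mathbf{GO}^+(q_0)$ and the identification $\mathbf{PGO}^+(q_0)\simeq\mathbf{PGL}_4$ is plausible in spirit but rests on exactly the assertions you flag as unchecked: that the composite $H^1(k,T)\to H^1(k,\mathbf{PGL}_4)\to\Br(k)$ agrees with the isomorphism $H^1(k,T)={}_2\Br(L/k)$ of Theorem~\ref{thm:Brauer}, and that twisting $q_0$ by $\xi_x$ yields an actual quadratic form (rather than merely a quadratic pair on a central simple algebra) isometric to $s_*(\qf{x}N_{S/C})$. As written this is a program, not a proof. Your ``computational alternative'' is essentially the paper's route: it proves that $s_*$ intertwines $\Cl$ on $I^2_q(C)$ with $\cor_{C/k}$ by reducing to $2$-fold Pfister generators $\langle1,-a\rangle\langle1,-b\rangle$ ($a\in k^\times$, $b\in C^\times$) via Frobenius reciprocity and the projection formula, then writes $s_*(\qf{x}N_{S/C})=s_*(\langle1,x\rangle N_{S/C})$ in the Witt group using the hyperbolicity of $q_0$ and observes $\Cl(\langle1,x\rangle N_{S/C})=(S/C,x)$. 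Your discriminant computation in (ii), including the tower formula and the fact that $W(\mathsf D_3)$ lands in $\mathfrak A_6$, agrees with the paper's.
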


La d\'emonstration est pr\'ec\'ed\'ee de deux lemmes.

\begin{lemme}
  \label{lem:hyperb}
  La forme quadratique $s_*(N_{S/C})$ est hyperbolique.
\end{lemme}

\begin{proof}
  Montrons en premier lieu que le sous-espace
  $k+k\cdot d\sigma^*(\ell)\subseteq S$ est totalement isotrope pour la
  forme $s_*(N_{S/C})$. En notant $\Tr_{S/C}$ la trace
  $S\to C$, on a pour $\alpha$, $\beta\in k$
  \[
  N_{S/C}\bigl(\alpha+\beta d\sigma^*(\ell)\bigr) =
  \alpha^2+\alpha\beta \Tr_{S/C}\bigl(d\sigma^*(\ell)\bigr) +
  \beta^2 c.
  \]
  Or, en prenant les diff\'erentielles des morphismes du carr\'e de droite
  de \eqref{diag:4}, on voit que
  \begin{equation}
  \label{eq:trace}
  \Tr_{S/C}\bigl(d\sigma^*(\ell)\bigr)=\Tr_L(\ell)\in k.
  \end{equation}
  (Voir aussi \cite[Prop.~5.16]{KT}.) D\`es lors,
  $N_{S/C}\bigl(\alpha+\beta d\sigma^*(\ell)\bigr)$ est dans le
  $k$-sous-espace de $C$ engendr\'e par $1$ et $c$. Comme $s(1)=s(c)=0$,
  la forme quadratique $s_*(N_{S/C})$ s'annule sur
  $k+k\cdot d\sigma^*(\ell)$.

  De cette observation, il r\'esulte que l'indice de Witt de
  $s_*(N_{S/C})$ est au moins \'egal \`a~$2$, donc
  $s_*(N_{S/C})$ est Witt-\'equivalente \`a une forme quadratique
  binaire. Pour achever de prouver le lemme, il suffit donc de prouver
  que le discriminant $\disc\bigl(s_*(N_{S/C})\bigr)$ est
  trivial. 

  Soit $\sgr_2$ le groupe \`a deux \'el\'ements. On consid\`ere
  $\disc\bigl(s_*(N_{S/C})\bigr)$ comme un \'el\'ement de
  $H^1(k,\sgr_2)$:
  \[
  \disc\bigl(s_*(N_{S/C})\bigr)\in H^1(k,\sgr_2).
  \]
  Pour le calculer, on choisit une forme quadratique hyperbolique
  $q_0\colon S\to C$. Soit $\Orth(q_0)$ son groupe orthogonal. La
  classe d'isom\'etrie de la forme $N_{S/C}$ s'identifie \`a un
  \'el\'ement $\nu$ de l'ensemble de cohomologie $H^1(C,\Orth(q_0))$, et
  son discriminant \`a l'image de $\nu$ dans $H^1(C,\sgr_2)$ sous
  l'application induite par le d\'eterminant (ou, en
  caract\'eristique~$2$, l'invariant de Dickson \cite[(12.12)]{BoI})
  $\delta\colon\Orth(q_0)\to\sgr_2$:
  \[
  \disc(N_{S/C})=\delta(\nu)\in H^1(C,\sgr_2).
  \]
  Toute isom\'etrie de $q_0$ peut \^etre vue comme une isom\'etrie de son
  transfert $s_*(q_0)$, donc on a un morphisme canonique de
  $k$-sch\'emas en groupes $s_\dagger\colon
  R_{C/k}\bigl(\Orth(q_0)\bigr)\to \Orth\bigl(s_*(q_0)\bigr)$. Ce
  morphisme fait commuter le diagramme suivant:
  \[
  \xymatrix@R=5mm{R_{C/k}\bigl(\Orth(q_0)\bigr)
    \ar[r]^{\delta}\ar[d]_{s_\dagger} &
  R_{C/k}(\sgr_2)\ar[d]^{N_{C/k}}\\
  \Orth\bigl(s_*(q_0)\bigr)\ar[r]^{\delta}&\sgr_2
  }
  \]
  On en d\'eduit un diagramme commutatif en cohomologie:
  \[
  \xymatrix@R=5mm{H^1\bigl(k,R_{C/k}(\Orth(q_0))\bigr)
    \ar[r]^{\delta}\ar[d]_{s_\dagger} &
    H^1\bigl(k,R_{C/k}(\sgr_2)\bigr)\ar[d]^{N_{C/k}}\\
  H^1\bigl(k,\Orth(s_*(q_0))\bigr)\ar[r]^{\delta}& H^1(k,\sgr_2)
  }
  \]
  Comme $s_\dagger(\nu)$ repr\'esente la classe d'isom\'etrie de
  $s_*(N_{S/C})$, il en d\'ecoule
  \begin{equation}
  \label{eq:disc1}
  \disc\bigl(s_*(N_{S/C})\bigr) =
  N_{C/k}\bigl(\disc(N_{S/C})\bigr).
  \end{equation}

  Pour conclure, on fait le lien avec une autre notion de
  discriminant: \`a toute $k$-alg\`ebre \'etale $E$ on fait correspondre une
  classe de cohomologie $\xi_E\in H^1(k,\sgr_n)$, o\`u $n=\dim E$ et
  $\sgr_n$ est le groupe des permutations de $n$ lettres (voir
  \cite[(29.9)]{BoI} ou \cite{W}). Le \emph{discriminant} $\disc(E/k)$
  est l'image de $\xi_E$ dans $H^1(k,\sgr_2)$ sous l'application
  d\'eduite de la signature $\sgr_n\to\sgr_2$. Cette notion s'applique
  aussi \`a la $C$-alg\`ebre $S$, et on v\'erifie sans peine que
  \begin{equation}
  \label{eq:disc2}
  \disc(S/C)=\disc(N_{S/C})\in H^1(C,\sgr_2) =
  H^1(k,R_{C/k}(\sgr_2)). 
  \end{equation}
  La formule pour le discriminant d'une tour d'extensions
  \cite[Th.~4]{W} donne
  \begin{equation*}
    \disc(S/k)=N_{C/k}\bigl(\disc(S/C)\bigr).
  \end{equation*}
  Or, la $k$-alg\`ebre $S$ a un discriminant trivial d'apr\`es
  \cite[Prop.~5.1]{KT}, donc
  \begin{equation}
    \label{eq:disc4}
    N_{C/k}\bigl(\disc(S/C)\bigr)=1.
  \end{equation}
  Le lemme d\'ecoule de \eqref{eq:disc1}, \eqref{eq:disc2}
  et \eqref{eq:disc4}.
\end{proof}

Dans l'\'enonc\'e suivant, on d\'esigne par $I^2_q(k)$ le groupe de Witt des
formes quadratiques sur $k$ de discriminant trivial et par $\Cl\colon
I^2_q(k)\to{}_2\Br(k)$ l'homomorphisme induit par l'application qui \`a
chaque forme quadratique associe son alg\`ebre de Clifford. On d\'efinit
de m\^eme le groupe $I^2_q(C)$ et l'homomorphisme $\Cl\colon
I^2_q(C)\to{}_2\Br(C)$. 

\begin{lemme}
  \label{lem:Clif}
  Le diagramme suivant est commutatif:
  \[
  \xymatrix@R=5mm{
  I^2_q(C)\ar[r]^{s_*}\ar[d]_{\Cl} & I^2_q(k)\ar[d]^{\Cl}\\
  {}_2\Br(C)\ar[r]^{\cor_{C/k}} &{}_2\Br(k)
  }
  \]
\end{lemme}

\begin{proof}
  Si la caract\'eristique est diff\'erente de $2$, le groupe $I^2_q(C)$
  est engendr\'e par les $2$-formes de Pfister
  $\langle1,-a\rangle\langle1,-b\rangle$ o\`u $a\in k^\times$ et
  $b\in C^\times$: voir \cite[Lemma~2]{M}. La commutativit\'e du
  diagramme d\'ecoule alors ais\'ement de la r\'eciprocit\'e de Frobenius
  \cite[Prop.~20.2]{EKM} et de la formule de projection. Les m\^emes
  arguments 
  s'appliquent en caract\'eristique~$2$ car $I^2_q(C)$ est engendr\'e
  comme $W(C)$-module
  par les $2$-formes de Pfister quadratiques $[1,a]\langle1,b\rangle$
  et $[1,b]\langle1,a\rangle$ o\`u $a\in k^\times$ et $b\in C^\times$.
\end{proof}

\begin{proof}[D\'emonstration du th\'eor\`eme~\ref{thm:Albert}]
  Vu la caract\'erisation des formes d'Albert dans \cite[(16.3)]{BoI},
  il s'agit de prouver que $s_*(\langle x\rangle\cdot
  N_{S/C})\in I^2_q(k)$ et
  \[
  \Cl\bigl(s_*(\langle x\rangle\cdot N_{S/C})\bigr) =
  \cor_{C/k}(S/C,x) \qquad\text{pour tout $x\in C^\times$.}
  \]
  Du lemme~\ref{lem:hyperb} on d\'eduit l'\'egalit\'e suivante dans le
  groupe de Witt
  des formes quadratiques non d\'eg\'en\'er\'ees de dimension paire:
  \[
  s_*(\langle x\rangle\cdot N_{S/C}) =
  s_*(\langle1,x\rangle\cdot N_{S/C}).
  \]
  Comme $\langle1,x\rangle\cdot N_{S/C}\in I^2_q(C)$, le
  lemme~\ref{lem:Clif} montre d'une part $s_*(\langle x\rangle\cdot
  N_{S/C})\in I^2_q(k)$, et d'autre part
  \[
  \Cl\bigl(s_*(\langle x\rangle\cdot N_{S/C})\bigr) =
  \cor_{C/k}\bigl(\Cl(\langle1,x\rangle\cdot N_{S/C})\bigr).
  \]
  Le th\'eor\`eme r\'esulte alors de l'observation que
  $\Cl(\langle1,x\rangle\cdot N_{S/C})=(S/C,x)$. 
\end{proof}

\`A partir du th\'eor\`eme~\ref{thm:Albert} il est ais\'e de d\'eterminer
quelles sont les alg\`ebres de quaternions sur $k$ qui sont d\'eploy\'ees
par $L$: celles-ci sont Brauer-\'equivalentes \`a des alg\`ebres de
degr\'e~$4$ dont la forme d'Albert est isotrope. Nous allons pr\'eciser la
forme de ces alg\`ebres, en supposant pour simplifier l'expos\'e que la
caract\'eristique est diff\'erente de~$2$, hypoth\`ese qui n'est cependant
nullement essentielle. Nous utiliserons la notation usuelle
$(\alpha,\beta)_k$ pour la $k$-alg\`ebre de quaternions
$(k[\sqrt\alpha]/k,\beta)$. Soient $\ell_1$, $\ell_2$, $\ell_3$,
$\ell_4\in k_s$ les racines du polyn\^ome minimal de $\ell$ sur $k$, et
soit $\rho(X)\in k[X]$ le polyn\^ome unitaire dont les racines dans
$k_s$ sont
\[
{\textstyle\frac14}(\ell_1+\ell_2-\ell_3-\ell_4)^2,\quad
{\textstyle\frac14}(\ell_1-\ell_2+\ell_3-\ell_4)^2,\quad
{\textstyle\frac14}(\ell_1-\ell_2-\ell_3+\ell_4)^2.
\]
Le polyn\^ome $\rho$ est une cubique r\'esolvante du polyn\^ome minimal de
$\ell$: voir \cite[(5.53)]{KT}.

\begin{corol}
  \label{cor:quat}
  Les alg\`ebres de quaternions sur $k$ d\'eploy\'ees par $L$ sont les
  alg\`ebres de quaternions qui peuvent s'\'ecrire sous la forme
  $\bigl(\lambda,-\rho(\lambda)\bigr)_k$ pour un certain $\lambda\in
  k^\times$ tel que $\rho(\lambda)\neq0$.
\end{corol}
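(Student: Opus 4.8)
The plan is to combine Corollary~\ref{cor:Brauer} and Theorem~\ref{thm:Albert} with an explicit evaluation of a corestriction from the cubic algebra $C$. The starting point is that a quaternion $k$-algebra $Q$ is split by $L$ if and only if its Brauer class lies in ${}_2\Br(L/k)$ (it has degree~$2$), and by Corollary~\ref{cor:Brauer} these are the classes $\cor_{C/k}(S/C,x)$ with $x\in C^\times$; moreover, by Theorem~\ref{thm:Albert} and the Albert correspondence \cite[\S16.A]{BoI}, such a class is represented by a quaternion algebra — i.e.\ has index $\le2$ — exactly when the Albert form $s_*(\qf{x}N_{S/C})$ is isotropic. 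So the task reduces to identifying, up to Brauer equivalence, the classes $\cor_{C/k}(S/C,x)$ for which $s_*(\qf{x}N_{S/C})$ is isotropic.

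First I would render the resolvent data concrete. Since the characteristic of $k$ is different from~$2$, completing the square in the quadratic equation satisfied by $d\sigma^*(\ell)$ over $C$ — whose trace $\Tr_{S/C}(d\sigma^*(\ell))=\Tr_L(\ell)$ lies in $k$ by \eqref{eq:trace} — produces an element $u:=d\sigma^*(\ell)-\tfrac12\Tr_L(\ell)$ with $u^2=t$, where $t:=\tfrac14\Tr_L(\ell)^2-c\in C$, so that $S=C[\sqrt t]$. Hence $N_{S/C}\cong\qf{1,-t}$ as a binary form over $C$ and $(S/C,x)=(t,x)_C$ in $\Br(C)$. A direct computation (compare \cite[(5.53)]{KT}) shows that the Galois conjugates of $t$ are precisely the three numbers $\tfrac14(\ell_i+\ell_j-\ell_k-\ell_l)^2$, i.e.\ the roots of $\rho$; as $t$ generates $C$ by property~(iii), this gives $C=k[t]\cong k[Y]/(\rho(Y))$ with $t\mapsto Y$, so $N_{C/k}(\lambda-t)=\rho(\lambda)$, and $N_{C/k}(t)\in k^{\times2}$ because $\disc(S/C)$ is the class of $t$ in $C^\times/C^{\times2}$ and $N_{C/k}(\disc(S/C))=1$ by \eqref{eq:disc4}.

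The computational core is the identity $\cor_{C/k}(t,\lambda-t)_C=(\lambda,-\rho(\lambda))_k$ for $\lambda\in k^\times$ with $\rho(\lambda)\ne0$. I would prove it inside ${}_2\Br$ via cup products: since $t/\lambda$ and $1-t/\lambda=(\lambda-t)/\lambda$ are units of $C$, the Steinberg relation yields $(t/\lambda,1-t/\lambda)_C=0$, whence bilinearity together with $(\lambda,\lambda)_C=(\lambda,-1)_C$ gives $(t,\lambda-t)_C=(t,\lambda)_C+(\lambda,\lambda-t)_C+(\lambda,-1)_C$; now apply $\cor_{C/k}$, using the projection formula to get $\cor_{C/k}(\lambda,\lambda-t)_C=(\lambda,\rho(\lambda))_k$, using $N_{C/k}(t)\in k^{\times2}$ to kill $\cor_{C/k}(t,\lambda)_C=(N_{C/k}(t),\lambda)_k$, and using $\cor_{C/k}\circ\operatorname{res}_{C/k}=[C:k]=3$ together with $2$-torsion to get $\cor_{C/k}(\lambda,-1)_C=(\lambda,-1)_k$; summing gives $(\lambda,-\rho(\lambda))_k$. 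In particular $(\lambda,-\rho(\lambda))_k=\cor_{C/k}(S/C,\lambda-t)$ lies in ${}_2\Br(L/k)$ by Corollary~\ref{cor:Brauer}, so every such algebra is split by $L$; this is one inclusion.

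For the reverse inclusion I would show that when $s_*(\qf{x}N_{S/C})$ is isotropic, $x$ can be adjusted modulo $k^\times N(S/C)$ to some $\lambda-t$ with $\lambda\in k^\times$, $\rho(\lambda)\ne0$. Isotropy means $s\bigl(x\,N_{S/C}(v)\bigr)=0$ for some $v\ne0$ in $S$, i.e.\ $x\,N_{S/C}(v)\in\ker s=k+kt$. The step I expect to be the main obstacle is arguing that $v$ may be taken to be a \emph{unit}: the Albert form has rank $6\ge3$ and a rational zero, so (as $k$ is infinite) the quadric $\{s_*(\qf{x}N_{S/C})=0\}$ is $k$-rational with Zariski-dense set of rational points, while the non-units of $S$ form the proper closed subset $\{N_{S/k}=0\}$, which cannot contain the quadric since it does not contain the hyperplane $x^{-1}(k+kt)\ni x^{-1}$. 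Taking $v\in S^\times$, one has $x\,N_{S/C}(v)=\mu+\nu t\in(k+kt)\cap C^\times$ with $N_{S/C}(v)\in N(S/C)$, so $[\cor_{C/k}(S/C,x)]=[\cor_{C/k}(S/C,\mu+\nu t)]$; if $\nu=0$ this class equals $(N_{C/k}(t),\mu)_k=0$, and if $\nu\ne0$ then writing $\mu+\nu t=-\nu(\lambda-t)$ with $\lambda=-\mu/\nu$ and discarding the vanishing terms $(N_{C/k}(t),-\nu)_k$ one obtains $(\lambda,-\rho(\lambda))_k$ by the identity above (here $\lambda-t\in C^\times$, so $\rho(\lambda)\ne0$). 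Finally the split class is $(\mu^2,-\rho(\mu^2))_k$ for any $\mu\in k^\times$ with $\rho(\mu^2)\ne0$, which exists as $k$ is infinite; so all cases are covered and the two inclusions give the corollary.
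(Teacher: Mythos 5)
Your proposal is correct and follows essentially the same route as the paper: Corollary~\ref{cor:Brauer} plus Theorem~\ref{thm:Albert} reduce everything to the isotropy of the transferred form $s_*(\qf{x}N_{S/C})$, the kernel of $s$ is identified with $k+kt$, and the class is evaluated by the projection formula together with $\rho(X)=N_{C/k}(X-t)$ --- your Steinberg-relation computation is exactly the paper's change of quaternionic generators $(t,\lambda(\lambda-t))_C=(\lambda,t-\lambda)_C$ in disguise, and your density argument for choosing a \emph{unit} isotropic vector makes explicit a point the paper passes over silently. The only loose end is the subcase $\mu=0$, $\nu\neq0$, where your key identity would be invoked with $\lambda=0\notin k^\times$; there one has instead $\cor_{C/k}(t,\nu t)_C=(N_{C/k}(t),-\nu)_k=0$ because $N_{C/k}(t)\in k^{\times2}$, so $Q$ is split and falls under your final catch-all, and the argument closes.
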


\begin{proof}
  Le polyn\^ome minimal sur $C$ de $d\sigma^*(\ell)\in S$ est
  $X^2-\Tr_L(\ell)+c$: voir~\eqref{eq:trace}; on a donc
  \[
  S=C(\sqrt a) \quad\text{avec}\quad
  a={\textstyle\frac14}\bigl(d\sigma^*(\ell)-\gamma(d\sigma^*(\ell))\bigr)^2
  = {\textstyle\frac14}\Tr_L(\ell)^2-c\in C.
  \]
  Soit $Q$ une alg\`ebre de quaternions sur $k$ d\'eploy\'ee par $L$. Si $Q$
  est d\'eploy\'ee, on peut l'\'ecrire sous la forme indiqu\'ee en prenant
  pour $\lambda$ un \'el\'ement de $k^{\times2}$ qui n'est pas racine de
  $\rho$. On peut donc supposer dans la suite que $Q$ n'est pas
  d\'eploy\'ee. D'apr\`es le corollaire~\ref{cor:Brauer} on peut trouver
  $x\in C^\times$ tel que $Q=\cor_{C/k}(S/C,x)$ dans
  $\Br(k)$. Comme l'indice de $Q$ est~$2$, la forme d'Albert de
  $\cor_{C/k}(S/C,x)$ est isotrope, donc le
  th\'eor\`eme~\ref{thm:Albert} montre que la forme quadratique
  $\qf{x}\cdot N_{S/C}$ repr\'esente un \'el\'ement de $\ker s$,
  c'est-\`a-dire un \'el\'ement de la forme $\lambda-\mu a$ avec $\lambda$,
  $\mu\in k$ puisque $\ker s= k+ck=k+ak$. Alors $x\cdot(\lambda-\mu
  a)$ est une norme pour $S/C$, donc
  $(S/C,x)=(S/C,\lambda-\mu a)$ et
  \[
  Q=\cor_{C/k}(a,\lambda-\mu a)_C\qquad\text{pour certains $\lambda$,
    $\mu\in k$ tels que $\lambda-\mu a\in C^\times$.}
  \]
  Si $\mu=0$, alors le corollaire~\ref{cor:Brauer} indique que $Q$ est
  d\'eploy\'ee, contrairement \`a l'hypoth\`ese. Donc $\mu\neq0$ et puisque
  $\cor_{C/k}(S/C,\alpha)=0$ pour tout $\alpha\in k^\times$ on
  peut supposer $\mu=1$. Alors
  \[
  Q=\cor_{C/k}(a,\lambda-a)_C \qquad\text{pour un certain
    $\lambda\in k$ tel que $\lambda-a\in C^\times$.}
  \]
  On a $\lambda\neq0$, sinon $Q$ est d\'eploy\'ee. Comme
  $\cor_{C/k}(a,\lambda)_C=0$ on a aussi
  \[
  Q=\cor_{C/k}(a,\lambda(\lambda-a))_C \qquad\text{pour un certain
    $\lambda\in k^\times$ tel que $\lambda-a\in C^\times$.}
  \]
  Or, en changeant de g\'en\'erateurs quaternioniens on voit que
  \[
  (\lambda,a-\lambda)_C=(a,\lambda(\lambda-a))_C,
  \] 
  donc finalement
  \[
  Q=\cor_{C/k}(\lambda,a-\lambda)_C = (\lambda,
  N_{C/k}(a-\lambda))_k
  \]
  pour un certain $\lambda\in k^\times$ tel que
  $\lambda-a\in C^\times$. 
  Il reste \`a voir que $-\rho(\lambda)=N_{C/k}(a-\lambda)$. Pour
  cela, on observe que les images de $d\sigma^*(\ell)\in S$ sous
  les diff\'erents homomorphismes de $k$-alg\`ebres $S\to k_s$ sont
  les $\ell_i+\ell_j$ avec $1\leq i<j\leq 4$: voir
  \cite[Prop.~5.16]{KT}. Comme
  $a=\frac14\bigl(d\sigma^*(\ell)-\gamma(d\sigma^*(\ell))\bigr)^2$, il
  en r\'esulte que les images de $a$ sous les homomorphismes de
  $k$-alg\`ebres $C\to k$ sont les racines de $\rho$, donc
  $\rho(X)=N_{C/k}(X-a)$ dans $k[X]$. 

  On a ainsi prouv\'e que toute alg\`ebre de quaternions sur $k$ d\'eploy\'ee
  par $L$ est de la forme indiqu\'ee. R\'eciproquement, si
  $Q=\bigl(\lambda,-\rho(\lambda)\bigr)_k$ pour un certain $\lambda\in
  k^\times$ tel que $\rho(\lambda)\neq0$, alors les calculs pr\'ec\'edents
  montrent que $Q=\cor_{C/k}(a,\lambda-a)_C$, donc $Q$ est
  d\'eploy\'ee par $L$ vu le corollaire~\ref{cor:Brauer}.
\end{proof}

Le corollaire~\ref{cor:quat} a \'et\'e prouv\'e par des m\'ethodes diff\'erentes
dans \cite[Cor.~22]{HT}, \cite[Th.~6.2]{HS} et \cite[Cor.~4]{S0} (et
dans \cite[Th.~3.9]{LLT} pour le cas particulier o\`u $L$ est une
$2$-extension). 

\section{Exemples}
\label{sec:ex}

Pour conclure, nous donnons quelques exemples destin\'es \`a illustrer les
constructions et les principaux r\'esultats de ce travail.

Le cas le plus g\'en\'eral est celui o\`u $L$ est un corps, extension
s\'eparable de $k$ dont la cl\^oture galoisienne a pour groupe de Galois
le groupe sym\'etrique $\sgr_4$. Supposons $L\subset k_s$, et d\'esignons
par $M$ la cl\^oture galoisienne de $L$ dans $k_s$. Identifions le
groupe de Galois de $M$ sur $k$ \`a $\sgr_4$ de sorte que $L$ soit le
sous-corps de $M$ fixe sous le groupe des permutations de
$\{2,3,4\}$. Le corps $M$ contient trois sous-corps conjugu\'es
isomorphes \`a $S$ et trois sous-corps conjugu\'es isomorphes \`a $C$:
on peut choisir
\begin{align*}
  S & = M^G&&\text{o\`u\quad
                        $G=\{\Id,(1,2),(3,4),(1,2)(3,4)\}$}\\ 
  \intertext{et}
  C & = M^H&&\text{o\`u\quad $H=G\cup\{(1,3)(2,4), (1,3,2,4),
                    (1,4)(2,3), (1,4,2,3)\}$.}
\end{align*}
Avec ces choix, l'homomorphisme $\sigma^*\colon L^\times\to
S^\times$ envoie $x\in L^\times$ sur $x\cdot(1,2)(x)$ et
l'homomorphisme $\tau^*\colon S^\times\to L^\times$ envoie
$y\in S^\times$ sur $y\cdot(2,3)(y)\cdot(2,4)(y)\in L^\times$.
\medbreak

Consid\'erons ensuite quelques cas particuliers.

\begin{example}[Extensions biquadratiques]
  \label{ex:biquadratic}
  Soit $L$ un corps, extension galoisienne de $k$ de groupe de Galois
  ab\'elien \'el\'ementaire d'ordre~$4$. Soient $L_1$, $L_2$, $L_3$ les
  sous-corps de $L$ qui sont quadratiques sur $k$. Alors 
  \[
  C=k\times k\times k\qquad\text{et}\qquad S=L_1\times L_2\times
  L_3.
  \]
  Les homomorphismes 
  $\sigma^*$ et $\tau^*$ op\`erent comme suit:
  \begin{align*}
    \sigma^*\colon&L^\times\to L_1^\times\times L_2^\times\times
                    L_3^\times &
    &x\mapsto
          \bigl(N_{L/L_1}(x),N_{L/L_2}(x),N_{L/L_3}(x)\bigr),
  \\
  \tau^*\colon&L_1^\times\times L_2^\times\times L_3^\times\to
                L^\times &
    &(y_1,y_2,y_3)\mapsto y_1y_2y_3.
  \end{align*}
  La proposition~\ref{prop:carrenorme} prend la forme suivante:
  \[
  \{x\in k^\times\mid x^2\in N(L/k)\} = k^{\times2}\cdot N(L_1/k)\cdot
  N(L_2/k)\cdot N(L_3/k)
  \]
  et
  \[
  k^{\times2}\cdot N(L/k) = N(L_1/k)\cap N(L_2/k)\cap N(L_3/k).
  \]
  La premi\`ere \'egalit\'e a d\'ej\`a \'et\'e observ\'ee (voir
  \cite[Ex.~5.1, p.~360]{CaF}); la deuxi\`eme traduit le \flqq lemme
  biquadratique\frqq\ bien connu (voir
  \cite[2.13]{EL}, \cite[Lemma~3]{MT} et les r\'ef\'erences qui y sont
  cit\'ees). La remarque~\ref{rem:precis} donne une pr\'ecision
  suppl\'ementaire: si on se donne
  $y_i\in L_i^\times$ pour $i=1$, $2$, $3$ tels que 
  \[
  N_{L_1/k}(y_1)=N_{L_2/k}(y_2)=N_{L_3/k}(y_3),
  \]
  alors on peut trouver $x\in L^\times$ et $\lambda\in k^\times$ tels
  que
  \[
  y_1=\lambda\, N_{L/L_1}(x),\qquad y_2=\lambda\, N_{L/L_2}(x),\qquad
  y_3=\lambda\, N_{L/L_3}(x).
  \]
  Le corollaire \ref{cor:Brauer} \'etablit 
  \[
  {}_2\Br(L/k)=\{
  (L_1/k,x_1)+(L_2/k,x_2)+(L_3/k,x_3)\mid x_1,x_2,x_3\in k^\times\},
  \]
  et montre que l'\'egalit\'e
  \[
  (L_1/k,x_1)+(L_2/k,x_2)+(L_3/k,x_3)=0\qquad\text{dans $\Br(k)$}
  \]
  entraine l'existence d'un \'el\'ement $\lambda\in k^\times$ tel que
  \[
  (L_i/k,x_i)=(L_i/k,\lambda)\qquad\text{pour $i=1$, $2$, $3$.}
  \]
  Ces r\'esultats sont connus: le premier est d\'emontr\'e dans
  \cite[Prop.~5.2]{LLT} et le second est une forme du \emph{``Common
    Slot Theorem''} \cite[Ch.~III, Th.~4.13]{Lam}.
\end{example}

\begin{remm}
  Vu la description de $S$ dans l'exemple~\ref{ex:biquadratic}, le
  tore coflasque associ\'e par Colliot-Th\'el\`ene \cite[\S3]{CT} au tore
  des \'el\'ements de norme~$1$ d'une extension biquadratique est
  $T^1_S$, et le morphisme $\tau^*$ apparait dans la ligne m\'ediane
  de \cite[(3.1)]{CT}. (Lorsque $L$ n'est pas biquadratique, le tore
  $T^1_S$ n'est pas n\'ecessairement coflasque.)
\end{remm}

\begin{example}[Extensions cycliques]
  Supposons que $L$ soit un corps, extension cyclique de $k$, et soit
  $K$ l'unique extension quadratique de $k$ contenue dans $L$. Soit
  encore $\theta$ un g\'en\'erateur du groupe de Galois de $L$ sur $k$. Dans
  ce cas
  \[
  C=k\times K\qquad\text{et}\qquad S=K\times L,
  \]
  et les homomorphismes $\sigma^*$ et $\tau^*$ sont donn\'es comme suit:
  \begin{align*}
    \sigma^*\colon&L^\times\to K^\times\times L^\times &
    &x\mapsto
          \bigl(N_{L/K}(x),x\theta(x)\bigr),
  \\
  \tau^*\colon&K^\times\times L^\times\to
                L^\times &
    &(y_1,y_2)\mapsto y_1y_2\theta^{-1}(y_2).
  \end{align*}
  Par la proposition~\ref{prop:carrenorme} on a
  \[
  \{x\in k^\times\mid x^2\in N(L/k)\} = N(K/k)\cdot N(L/k)
  \text{ et } N(K/k)\cap N(L/K) = k^{\times2}\cdot N(L/k).
  \]
  De plus, vu l'exactitude de la suite inf\'erieure
  de~\eqref{diag:Brauer} en $S^\times$, on voit que si $y_1\in
  K^\times$ et $y_2\in L^\times$ satisfont
  $N_{K/k}(y_1)=N_{L/K}(y_2)$, alors on peut trouver $\lambda\in
  k^\times$ et $x\in L^\times$ tels que $y_1=\lambda N_{L/K}(x)$ et
  $y_2=\lambda x\theta(x)$. Le corollaire~\ref{cor:Brauer} donne
  \[
  {}_2\Br(L/k)=\{(K/k,x_1)+\cor_{K/k}(L/K,x_2)\mid x_1\in k^\times,
  x_2\in K^\times\};
  \]
  de plus, il montre que pour $x_1\in k^\times$ et $x_2\in K^\times$,
  la relation
  \[
  (K/k,x_1)+\cor_{K/k}(L/K,x_2)=0\qquad\text{dans $\Br(k)$}
  \]
  entraine l'existence de $\lambda\in k^\times$ tel que
  \[
  (K/k,x_1)=(K/k,\lambda)\qquad\text{et}\qquad
  (L/K,x_2)=(L/K,\lambda).
  \]
\end{example}

Les deux exemples pr\'ec\'edents sont des cas particuliers du suivant:

\begin{example}[$2$-extensions quartiques]
  \label{ex:2}
  On dit que le corps $L$ est une \emph{$2$-extension quartique} de
  $k$ s'il est une extension s\'eparable de degr\'e~$4$ qui contient une
  extension quadratique $K$ de $k$. Une telle extension est
  caract\'eris\'ee par la condition que l'alg\`ebre $C$ n'est pas un
  corps; dans ce cas $C=k\times\Delta(L)$ o\`u $\Delta(L)$ est la
  $k$-alg\`ebre quadratique discriminante de $L$ (qui correspond \`a
  $\disc(L/k)\in H^1(k,\sgr_2)$ par la correspondance entre
  $H^1(k,\sgr_2)$ et les classes d'isomorphisme de $k$-alg\`ebres
  quadratiques \'etales); voir \cite[Prop.~6.12]{KT}. La $k$-alg\`ebre $L$
  est biquadratique si et seulement si $\Delta(L)=k\times k$. Si l'on
  \'ecarte ce cas (trait\'e en d\'etail dans
  l'exemple~\ref{ex:biquadratic}), l'extension quadratique $K$ est
  unique, et on associe canoniquement \`a $L$ (\`a isomorphisme pr\`es) une
  $k$-alg\`ebre \'etale 
  quartique $\check{L}$ \flqq duale\frqq\ de $L$ (voir
  \cite[(6.19)]{KT}). L'alg\`ebre $\check{L}$ contient $\Delta(L)$ et
  satisfait
  \[
  \Delta(\check{L})=K\qquad\text{et}\qquad S=K\times\check{L}.
  \]
  (Le cas particulier o\`u $L$ est une extension cyclique de $k$ est
  celui o\`u $\check{L}=L$.) La proposition~\ref{prop:carrenorme} donne
  \[
  \{x\in k^\times\mid x^2\in N(L/k)\} = N(K/k)\cdot N(\check{L}/k)
  \]
  et
  \[
  N(K/k)\cap N(\check{L}/\Delta(L))=k^{\times2}\cdot N(L/k).
  \]
  D'autre part, le corollaire~\ref{cor:Brauer} montre
  \[
  {}_2\Br(L/k)=\{(K/k,x_1)+\cor_{\Delta(L)/k}(\check{L}/\Delta(L),
  x_2) \mid x_1\in k^\times, x_2\in\Delta(L)^\times\}.
  \]
  De plus, si $x_1\in k^\times$ et $x_2\in \Delta(L)^\times$ satisfont
  \[
  (K/k,x_1)+\cor_{\Delta(L)/k}(\check{L}/\Delta(L),
  x_2)=0\qquad\text{dans $\Br(k)$},
  \]
  alors il existe $\lambda\in k^\times$ tel que
  \[
  (K/k,x_1)=(K/k,\lambda) \qquad\text{et}\qquad
  (\check{L}/\Delta(L),x_2) = (\check{L}/\Delta(L),\lambda).
  \]
\end{example}

Dans notre dernier exemple, la $k$-alg\`ebre quartique $L$ n'est pas un
corps: 

\begin{example}
  \label{ex:C2xC2}
  Soit $L=K_1\times K_2$ o\`u $K_1$ et $K_2$ sont des extensions
  quadratiques s\'eparables de $k$. On pose $M=K_1\otimes_kK_2$ et on
  d\'esigne par $K_0$ le produit de $K_1$ et $K_2$ dans le groupe des
  extensions quadratiques s\'eparables de $k$ (qui est isomorphe \`a
  $H^1(k,\sgr_2)$; voir \cite[(4.29)]{KT}). On a alors
  \[
  S=(k\times k)\times M \qquad\text{et}\qquad
  C= k\times K_0.
  \]
  Les homomorphismes $\sigma^*$ et $\tau^*$ op\`erent comme suit:
  \begin{align*}
    \sigma^*\colon&K_1^\times\times K_2^\times\to k^\times\times
                    k^\times\times M^\times &&
    (x_1,x_2)\mapsto(N_{K_1/k}(x_1), N_{K_2/k}(x_2),
                    x_1\otimes x_2),
  \\
    \tau^*\colon&k^\times\times k^\times\times M^\times\to
                  K_1^\times\times K_2^\times &&
    (\lambda_1,\lambda_2,y)\mapsto \bigl(\lambda_1 N_{M/K_1}(y), 
    \lambda_2 N_{M/K_2}(y)\bigr).
  \end{align*}
  En l'occurrence, $N(S/k)=k^\times$ et $N(L/k)=N(K_1/k)\cdot
  N(K_2/k)$, donc $k^{\times2}\subseteq N(L/k)$. La
  proposition~\ref{prop:carrenorme} donne simplement
  \[
  \{x\in k^\times\mid x^2\in N(K_1/k)\cdot N(K_2/k)\}=k^\times
  \text{ et }
  N(K_1/k)\cdot N(K_2/k)=N(M/K_0)\cap k^\times.
  \]
  De la remarque~\ref{rem:precis}, on tire l'information plus pr\'ecise
  suivante: si 
  $\lambda_1$, $\lambda_2\in k^\times$ et $m\in M^\times$ sont tels
  que $N_{M/K_0}(m)=\lambda_1\lambda_2$, alors on peut trouver $x_1\in
  K_1^\times$, $x_2\in K_2^\times$ et $\mu\in k^\times$ tels que
  \[
  \lambda_1=\mu\, N_{K_1/k}(x_1),\quad \lambda_2=\mu\,
  N_{K_2/k}(x_2), \quad m=\mu\, x_1\otimes x_2.
  \]
  Le corollaire~\ref{cor:Brauer} donne
  \[
  \Br(K_1/k)\cap\Br(K_2/k)=\{\cor_{K_0/k}(M/K_0,x)\mid x\in
  K_0^\times\}
  \]
  et montre que le noyau de la corestriction $\cor_{K_0/k}\colon
  \Br(M/K_0)\to\Br(k)$ est form\'e des classes d'alg\`ebres de quaternions
  de la forme $(M/K_0,\lambda)$ o\`u $\lambda\in k^\times$.
\end{example}

\begin{remm}
  Soit $L$ une alg\`ebre \'etale quartique sur un corps global
  $k$. Supposons que l'alg\`ebre cubique r\'esolvante $C$ ne soit pas un
  corps; elle a donc au moins un facteur isomorphe \`a $k$. Si $C\simeq
  k\times C'$ o\`u $C'$ est une $k$-alg\`ebre \'etale quadratique, alors
  $T^1_C\simeq T_{C'}$, donc $H^1(k,T^1_C)=1$ par le th\'eor\`eme~90 de
  Hilbert et $\Sha^2(k,T^1_C)=1$ par le th\'eor\`eme de
  Brauer--Hasse--Noether--Albert. D\`es lors, la suite exacte
  \eqref{exseq:2} donne un isomorphisme induit par le morphisme
  $\tau^*_J$:
  \begin{equation}
    \label{eq:PH}
    \Sha^1(k,T^1_S)\simeq \Sha^1(k,T^1_L).
  \end{equation}

  Si par exemple $L$ est un corps, extension biquadratique de $k$, et
  que $L_1$, $L_2$, $L_3$ sont les sous-corps de $L$ qui sont
  quadratiques sur $k$, comme dans l'exemple~\ref{ex:biquadratic},
  alors $\Sha^1(k,T^1_S)$ mesure l'obstruction au principe de Hasse
  pour les \'equations de la forme
  \begin{equation}
    \label{eq:multi}
    N_{L_1/k}(y_1)\, N_{L_2/k}(y_2)\, N_{L_3/k}(y_3) = a,
  \end{equation}
  appel\'ees \emph{\'equations multinormes}. Vu~\eqref{eq:PH}, le principe
  de Hasse vaut pour l'\'equation~\eqref{eq:multi} si et seulement s'il
  vaut pour l'\'equation $N_{L/k}(x)=a^2$. On sait que ce n'est pas
  toujours le cas: voir \cite[Ex.~5, p.~360]{CaF},
  \cite[Ex.~2]{PoRa}.

  Si $L$ est un corps, $2$-extension quartique mais non biquadratique
  de $k$, comme dans l'exemple~\ref{ex:2}, alors $S$ est un produit
  direct de deux corps, $S\simeq K\times \check{L}$. Dans ce cas, les
  deux membres de \eqref{eq:PH} sont triviaux: cela d\'ecoule d'un
  r\'esultat de Bayer-Fluckiger--Lee--Parimala
  \cite[Prop.~2.3]{BLP} sur les \'equations multinormes pour les
  produits de deux corps et d'un th\'eor\`eme de Bartels
  \cite[Satz~1]{Bar} sur les normes d'extensions dont le groupe de
  Galois est di\'edral. 

  Enfin, si $L=K_1\times K_2$ o\`u $K_1$ et $K_2$ sont deux corps,
  extensions quadratiques de $k$, comme dans l'exemple~\ref{ex:C2xC2},
  alors $\Sha^1(k,T^1_S)=1$ puisque
  $N(S/k)=k^\times$. Vu~\eqref{eq:PH}, le principe de Hasse vaut pour
  les \'equations multinormes de la forme
  \[
  N_{K_1/k}(x_1)\, N_{K_2/k}(x_2)=a.
  \]
  C'est un cas particulier d'un r\'esultat de H\"urlimann
  \cite[Prop.~3.3]{Hu}.
\end{remm}

\bibliography{Quartic}

\begin{flushright}
\begin{tabular}{l}
Universit\'e catholique de Louvain\\ 
Institut ICTEAM
\\ 
Avenue G.~Lema\^{\i}tre 4, Boite~L4.05.01\\
B-1348 Louvain-la-Neuve, Belgique\\
\textit{M\'el.:} \verb+Jean-Pierre.Tignol@uclouvain.be+
\end{tabular}
\end{flushright}
\end{document}